\theoremstyle{definition}
\newtheorem{definition}{Definition}[section]
\newtheorem{example}[definition]{Example}
\theoremstyle{plain}
\newtheorem{theorem}[definition]{Theorem}
\newtheorem*{theorem*}{Theorem}
\newtheorem{maintheorem}{Theorem}
\newtheorem{proposition}[definition]{Proposition}
\newtheorem{lemma}[definition]{Lemma}
\newtheorem*{question}{Question}
\newtheorem{corollary}[definition]{Corollary}
\def\op{\operatorname}
\def\A{\mathbf{A}}
\def\C{\mathbb{C}}
\def\D{\mathfrak{D}}
\def\E{\mathscr{E}}
\def\F{\mathscr{F}}
\def\G{\mathscr{G}}
\def\H{\mathscr{H}}
\def\I{\mathscr{I}}
\def\K{\mathscr{K}}
\def\L{\mathscr{L}}
\def\M{\mathcal{M}}
\def\O{\mathscr{O}}
\def\P{\mathbf{P}}
\def\Spec{\op{Spec}}
\def\Fitt{\mathscr{F}itt}
\def\Hom{\op{Hom}}
\def\Ext{\op{Ext}}
\def\sExt{\mathscr{E}\kern -.5pt xt}
\newcommand*{\sHom}{\mathscr{H}\kern -.5pt om}
\renewcommand{\tilde}{\widetilde}
\def\coker{\op{coker}}
\def\im{\op{im}}
\def\rank{\op{rank}}
\def\depth{\op{depth}}
\def\proj.dim{\op{proj.dim}}
\def\a{\underline{a}}
\def\b{\underline{b}}
\def\c{\underline{c}}
\def\d{\underline{d}}
\begin{document}

\title{Biliaison of sheaves}
\author{Mengyuan Zhang}
\address{Department of Mathematics, University of California, 
Berkeley, CA 94720}
\email{myzhang@berkeley.edu}

\begin{abstract}
We define an equivalence relation among coherent sheaves on a projective variety called biliaison.
We prove the existence of sheaves that are minimal in a biliaison class in a suitable sense, and show that all sheaves in the same class can be obtained from a minimal one using certain deformations and other basic moves.
Our results generalize the main theorems of liaison theory of subvarieties to sheaves, and provide a framework to study sheaves and subvarieties simultaneously.
\end{abstract}

\maketitle 

\section*{Introduction}
Liaison theory originated from the work of M. Noether in 1882 that classified algebraic curves in $\P^3_\C$ of degree $\le 20$.
It has since become instrumental in the study of the Hilbert scheme of projective spaces \cite{MDP90,Bolondi94}.
Over the last decades, tremendous progress and generalizations were made thanks to Peskine, Szpiro, Rao, Lazarsfeld, Ballico, Bolondi, Migliore, Nollet, Strano, Hartshorne and many others.
Briefly, a (geometric) link is a pair of subschemes that are residual to each other in a complete intersection.
An even linkage class consists of those subschemes that can be obtained from one another using even numbers of links.
The following is a summary of the main results of liaison theory.
\begin{enumerate}[leftmargin=*]
\item The degrees of subschemes in an even linkage class are bounded below, and those with the minimal degree differ by a deformation preserving cohomologies \cite{Migliore83,BM89}.
\item Any subscheme can be obtained from a minimal one in its even linkage class using certain elementary moves \cite{LR83,BBM91,Nollet96,Strano04,Hartshorne03}. 
\item Even linkage classes are in bijection with stable equivalence classes of primitive bundles \cite{Rao78,Rao81}.
\end{enumerate}
In particular, the numerical invariants of subschemes in an even linkage class can be systematically deduced from those of a minimal one.
We refer to the book \cite{Migliore98} for an introduction to liaison theory, and to \cite[Introduction]{Hartshorne03} for a survey of these results.

\medskip

In this article we study an equivalence relation among sheaves on a projective variety called biliaison, which generalizes even linkages of varieties.
We prove the following.
\begin{enumerate}[leftmargin=*, label=(\alph*)]
\item There are minimal sheaves in each biliaison class under a suitable preorder (\Cref{Existence}). 
Those that are minimal can be obtained from each other using a rational deformation preserving cohomologies (\Cref{Rigid}).
\item Any sheaf can be obtained from a minimal one in its biliaison class using rigid deformations and certain basic moves (\Cref{LazarsfeldRao}).
\end{enumerate}
Hartshorne \cite{Hartshorne03} has proven that (c) biliaison classes of sheaves are in bijection with stable equivalence classes of primitive sheaves. 
These results (a)-(c) give us satisfactory extensions of (1)-(3) above.
The biliaison theory for the special case of rank two reflexive sheaves on $X = \P^3_k$ was established by Buraggina in \cite{Buraggina99} using Serre correspondences and results from liaison theory of curves. 
Our method provides a substantially simplified treatment as well as stronger theorems even in this special case.

\medskip

Aside from structure theorems for biliaison classes, we also prove a sufficient criterion for a sheaf to be minimal (\Cref{Sufficient}), generalizing the criterion of \cite{LR83} for curves to be minimal in $\P^3_k$. 
As application, we prove that the Horrocks-Mumford bundle on $\P^4_\C$ is a minimal reflexive sheaf.
We also provide a conceptual proof that indecomposable rank two bundles on $\P^3_k$ are minimal reflexive sheaves, which is the main result in \cite{Buraggina99} obtained from the computations in \cite{MDP92}.

\medskip

Let $X$ be a projective variety with a very ample line bundle $\O(1)$.
We say a sheaf $\F$ is a descendant of $\E$, and $\E$ is an ancestor of $\F$, if there is an exact sequence of the form 
\[
0 \to \bigoplus_{i = 1}^u \O(-a_i)\to \E\oplus \bigoplus_{i = 1}^v \O(-b_i) \to \F \to 0
\]
for some $a_i,b_i \in\mathbb{Z}$. 
We say $\F_1\sim \F_2$ iff they have a common ancestor, and define the equivalence relation generated by the relation $\sim$ to be biliaison.
It is a theorem of Rao that two codimension two subschemes $Y,Z$ are evenly linked iff $\I_Y$ and $\I_Z(\delta)$ are biliaison equivalent for some $\delta\in\mathbb{Z}$.
Thus biliaison of sheaves generalizes even linkage of subschemes.

\bigskip

Biliaison equivalence is closely related to stable equivalence.
If $H^1(\O(l)) = 0$ for all $l$, then $\sim$ is itself an equivalence (thus the same as biliaison) and is also called ``psi-equivalence" in \cite{Hartshorne03}.
If two sheaves $\F_1$ and $\F_2$ are stably equivalent, i.e. $\F_1\oplus \O(\a)\cong \F_2\oplus \O(\b)$ for some sequences $\a,\b$, then clearly $\F_1\sim \F_2$. 
The converse is only true when both $\F_i$ are primitive, i.e. $\Ext^1(\F_i,\O(l)) = 0$ for all $l$.  
It follows that biliaison classes that contain a primitive sheaf are in bijection with stable equivalence classes of primitive sheaves.
For example, any biliaison class containing a torsion-free sheaf also contains a primitive sheaf.

\bigskip

Biliaison equivalence has the notion of Serre correspondence built in.
A Serre correspondence is an extension of the form $0 \to \O(a) \to \E \to \I_Y \to 0$,
where $\E$ is a rank two bundle and $\I_Y$ is an ideal sheaf.
This correspondence connects the study of rank two bundles with that of codimension two local complete intersections. 
Hartshorne \cite{Hartshorne80} generalized this correspondence by relaxing the condition on $\E$ to be reflexive, and studied rank two reflexive sheaves on $\P^3_k$ via curves in $\P^3_k$ and vice versa.
If $\E$ and $\E'$ are two rank two reflexive sheaves corresponding to the same curve $C$, then $\E$ and $\E'$ are clearly closely related. 
It turns out that we can generalize ``elementary biliaisons", the basic moves needed to construct subschemes from a minimal one, by considering two-sided Serre correspondences. 

\bigskip

Our work paves the way for a program to describe the moduli $\M$  (in a broad sense) of (e.g. semistable, stable) torsion-free sheaves on a smooth projective variety $X$ with irregularity zero.
First, we partition $\M$ by biliaison equivalence into pieces $\M_{\E}$ that are labeled by stable equivalence classes of primitive sheaves. 
We need to wisely choose a very ample line bundle and classify the stable equivalence classes of primitive sheaves on $X$.
Next, we partition each $\M_{\E}$ into $\M_{\E,\Sigma}$ by $\Sigma$ functions.
If we can determine the $\Sigma$ functions of the minimal sheaves in $\M_{\E}$, then we can systematically deduce the numerical invariants of sheaves in $\M_{\E,\Sigma}$, the dimensions of the pieces $\M_{\E,\Sigma}$ and the dimensions of tangent spaces etc.
If we restrict to rank one torsion free sheaves $X = \P^3_k$, which are exactly ideal sheaves up to twist, then this program has been successfully carried out in \cite{MDP90}, where the appropriate moduli $\M$ is the Hilbert scheme.
Another example is the description of the moduli of bundles on $X = \P^n_k$ in the class of the zero sheaf  \cite{Bundles}.
The theory of biliasion of sheaves studies (the ideal sheaves of) subschemes, reflexive sheaves, bundles in one fell swoop, while utilizing the relations between pieces of the moduli that correspond to sheaves of different ranks.

\section{Notations and assumptions}

Throughout, we work on a Cohen-Macaulay projective variety $X$ over an infinite field $k$ with a fixed very ample line bundle $\O(1)$.
All sheaves in consideration are coherent on $X$.
We write $H^i_*(\F)$ to denote $\bigoplus_{l\in\mathbb{Z}} H^i(\F(l))$.
We use underlined letters such as $\a$ to denote a finite sequence of integers $(a_i)_{i = 1}^u$, and write $\O(\a)$ in place of $\bigoplus_{i = 1}^u \O(-a_i)$ for brevity.

\bigskip

Recall that a sheaf $\E$ satisfies \emph{Serre's condition} $(S_m)$ if $\depth \E_x \ge \min(m,\dim \O_x)$ for all $x\in X$.
In this article, we need a slightly stronger condition. 
We say that $\E$ satisfies $(S_m^+)$ if $\E$ satisfies $(S_m)$ and is locally-free in codimension $m$.
The latter condition follows from the former if $\E_x$ has finite projective dimension over $\O_x$ at all points $x$ of codimension $m$ in $X$. 
For example, the extra condition $(^+)$ makes no difference when $X$ is regular in codimension $m$.
All results in this article remain valid if one replaces the condition $(S_m^+)$ with suitable conditions of sufficient depth and locally-freeness in certain codimensions, or other conditions that behave well in a short exact sequence (e.g. \cite[condition $(T)$]{Hartshorne03}).

\bigskip

For readers who are interested in vector bundles on projective varieties and who wish to get a gist of the ideas in this article without too much commutative algebra, it is advisable to take $m = \dim X$ throughout and replace all occurrences of ``$(S_m^+)$ sheaves'' with ``bundles''.

\section{Lattice structure}

\begin{definition}
An \emph{$m$-reduction} of a sheaf $\E$ is an injective map of the form $\varphi:\O(\a) \to \E$ where $\coker \varphi$ satisfies $(S_m^+)$. 
We define the \emph{shape} of the reduction $\varphi$ to be the sequence $\a$ sorted in \textbf{ascending} order.
\end{definition}

 In order for a sheaf $\E$ to admit an $m$-reduction, it must satisfy $(S_m^+)$ itself. 
A main result in \cite{Basic} states that if $\E$ has rank $r\ge m$ and satisfies $(S_m^+)$, then it admits an $m$-reduction whose cokernel has rank $m$.

Other than the case of bundles, the following $(S_m^+)$ sheaves are of interest to us. 
If $X$ is regular in codimension one, then $\E$ satisfies $(S_1^+)$ iff it is torsion-free.
In this case, a rank one sheaf $\E$ satisfies $(S_1^+)$ iff it is isomorphic to $\I\otimes \L$, where $\I$ is an ideal sheaf and $\L$ is a line bundle. 
If $X$ is regular in codimension two, then $\E$ satisfies $(S_2^+)$ iff $\E$ is reflexive, i.e. the natural map $\E \to \E^{**}$ is an isomorphism.

The following examples of $m$-reductions in the literature motivated our definition.

\begin{example}
Suppose  $\dim X = m$ and $\E$ is locally-free.
An $m$-reduction of $\E$ is an injective map $\varphi: \O(\a)\to \E$ whose cokernel is locally-free, which corresponds to sections $s_1,\dots, s_u$ of $\E$ in degrees $\a$ that are linearly independent at every fiber over $x\in X$.
Serre proved that if $\rank \E = r >m$, then there is an $m$-reduction of $\E$ of rank $r -m$ \cite[p148]{Mumford66}.
\end{example}

\begin{example}\label{GH}
Let $X$ be a smooth surface.
A sheaf $\E$ satisfies $(S_2^+)$ iff it is locally-free. 
Griffith and Harris \cite{GH78} proved that points $Z$ on $X$ arises from an extension 
\[
0 \to \L \to \E \to \I_Z \to 0 
\]
for some line bundle $\L$ and rank two locally-free sheaf $\E$ if and only if $Z$ is a local complete intersection and satisfies the Cayley-Bacharach property with respect to the linear system $|\L\otimes \omega_X|$.
This result sets up a correspondence between rank two locally-free sheaves with points on a smooth surface.
\end{example}

\begin{example}\label{Serre}
Let $X = \P^3_k$. 
Serre \cite{Serre58} proved that a curve $C$ in $\P^3_k$ arises as the vanishing scheme of a rank two locally-free sheaf $\E$, i.e. there is an extension of the form 
\[
0\to \O(-a) \to \E \to \I_C \to 0
\]
 if and only if $C$ is a local complete intersection and $\omega_C \cong \O_C(a-4)$.
This result sets up a correspondence between rank two locally-free sheaves on $\P^3_k$ and subcanonical local complete intersection curves.
\end{example}

\begin{example}\label{Reflexive}
Hartshorne \cite{Hartshorne80} generalized the above to a correspondence between rank two reflexive sheaves and generic complete intersection curves in $\P^3_k$. 
A Serre correspondence is defined to be an extension of the form
\[
0 \to \O(-a) \to \E \to \I_C \to 0
\]
where $\E$ is a rank two reflexive sheaf and $\I_C$ is an ideal sheaf.  
\end{example}

If we consider the shapes of all $m$-reductions $\phi:\O(\a)\to \E$ of the same sheaf $\E$, we see that they are partially ordered in a natural way.
We define this partial order more generally on the set of all finite non-decreasing sequences of integers.

\begin{definition}
Let $\a$ and $\b$ be two finite non-decreasing sequences of integers.
\begin{enumerate}[leftmargin=*]
\item For an integer $l$, we define $\Sigma(\a,l)$ to be the number of entries of $\a$ that is $\le l$.

\noindent Note that the non-decreasing function $\Sigma(\a,-): \mathbb{Z} \to \mathbb{N}$ determines the non-decreasing sequence $\a$.
\item We write $\a \le \b$ if $\Sigma(\a,l) \le \Sigma(\b,l)$ for all $l\in\mathbb{Z}$. 
\item Let $\a \vee \b$ be the non-decreasing sequence $\c$ determined by the property that 
\[
\Sigma(\c,l) = \min(\Sigma(\a,l),\Sigma(\b,l)),\quad \forall l\in\mathbb{Z}.
\]
\item Let $\a\wedge \b$ be the non-decreasing sequence $\c$ determined by the property that  
\[
\Sigma(\c,l) = \max(\Sigma(\a,l),\Sigma(\b,l)),\quad \forall l\in\mathbb{Z}.
\]
\end{enumerate}

Let $\mathfrak{S}$ denote the set of finite non-decreasing sequences of integers.

It is easy to see that the poset $(\mathfrak{S},\le)$ is a lattice with meet $\vee$ and join $\wedge$.
\end{definition}

\begin{example}\label{ExLattice}
If $\a = (1,3,4)$ and $\b = (2,2)$, then $\a\wedge \b = (1,2,4)$ and $ \a\vee \b = (2,3)$.
\end{example}

Here is an equivalent way to determine $\a\vee \b$ and $\a \wedge \b$ without writing down $\Sigma(\a,-)$ and $\Sigma(\b,-)$. 
We illustrate on the previous example.
First append $\infty$ to the shorter sequence till the lengths match up: $\a = (1,3,4)$ and $\b = (2,2,\infty)$. 
Then $\a\vee \b$ and $\a \wedge \b$ are given by the position-wise maximum and minimum, with $\infty$ interpreted as a non-entry.

\bigskip

Our first main result is that the shapes of the $m$-reductions of a given sheaf $\E$ form a subsemilattice of $\mathfrak{S}$.

\begin{maintheorem}[Semilattice theorem]\label{Main1}
For a fixed $m\ge 1$, the shapes of $m$-reductions of a given sheaf $\E$ is a subsemilattice of $\mathfrak{S}$.
When $m = 1$, the shapes of $1$-reductions of a given sheaf $\E$ is a sublattice of $\mathfrak{S}$. 
\end{maintheorem}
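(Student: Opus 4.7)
The plan is to construct explicit $m$-reductions of the desired shapes from the two given ones $\varphi_i : \O(\a_i) \to \E$ with $\a_i = (a_{i,1} \le \cdots \le a_{i,n_i})$. Assume WLOG that $n_1 \le n_2$. Then $(\a_1 \vee \a_2)_j = \max(a_{1,j}, a_{2,j})$ for $j = 1, \ldots, n_1$, while $\a_1 \wedge \a_2$ has length $n_2$, with $j$-th entry $\min(a_{1,j}, a_{2,j})$ for $j \le n_1$ and $a_{2,j}$ for $j > n_1$. I would first build an $m$-reduction of shape $\a_1 \vee \a_2$ for general $m$, then treat the join for $m = 1$.

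For the meet, I would form the combined map $\Phi = (\varphi_1, \varphi_2) : \O(\a_1) \oplus \O(\a_2) \to \E$ and search for a subsheaf $L \cong \O(\a_1 \vee \a_2)$ of $\O(\a_1) \oplus \O(\a_2)$ such that $\Phi|_L$ is an $m$-reduction of $\E$. Such maps $\O(\a_1 \vee \a_2) \to \O(\a_1) \oplus \O(\a_2)$ are matrices whose $(j,i)$-entry lies in $H^0(\O(c_j - a_{1,i}))$ (and analogously for the $\O(\a_2)$ summand), with $c_j = \max(a_{1,j}, a_{2,j})$; these form an affine space over the infinite field $k$. Equivalently, I am looking for sections
\[
\sigma_j = \sum_i p_{j,i}\, s_{1,i} + \sum_\ell q_{j,\ell}\, s_{2,\ell} \in H^0(\E(c_j)),
\]
where $s_{r,\cdot}$ are the sections defining $\varphi_r$ and $p_{j,i}$, $q_{j,\ell}$ are polynomials of degrees $c_j - a_{1,i}$, $c_j - a_{2,\ell}$ (understood to be zero when negative). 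A generic choice in this parameter space is my candidate reduction $\psi$.

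Two conditions must be checked: (i) injectivity of $\psi$, and (ii) the $(S_m^+)$ condition on $\coker \psi$. Condition (i) is the non-degeneracy of a polynomial matrix, an open condition on the parameter space, so it holds generically. Condition (ii) is the main obstacle. My approach is to relate $\coker \psi$ to the cokernels $\F_i = \coker \varphi_i$ (which are $(S_m^+)$ by hypothesis) via a diagram chase through $\Phi$, and then invoke a Bertini-type genericity argument --- valid because $k$ is infinite --- to show that the failure locus of $(S_m^+)$ in the parameter space is a proper closed subset. The delicate point is to identify the correct closed conditions carving out this failure locus and to control them using the known properties of $\F_1, \F_2$; this is where I expect most of the technical work to concentrate.

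For the join in the case $m = 1$, the construction is analogous but builds a subsheaf $L' \cong \O(\a_1 \wedge \a_2)$ of $\O(\a_1) \oplus \O(\a_2)$ of the longer shape. The condition $(S_1^+)$ --- torsion-free plus locally-free in codimension $1$ --- only constrains behavior at codimension-$1$ points, where the generic position argument has room to maneuver. For $m \ge 2$ the deeper depth requirements on the cokernel can genuinely fail for the longer join shape, which is the structural reason for the lattice-versus-semilattice dichotomy; making this observation rigorous (either by verifying it does not obstruct the meet or by exhibiting where it obstructs the join) is the secondary technical burden of the proof.
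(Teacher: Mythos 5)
Your proposal has the central dichotomy exactly backwards, and this is a substantive error rather than a labeling slip. The operation under which the shapes are closed for \emph{every} $m\ge 1$ is $\a\wedge\b$, the position-wise \emph{minimum} (the longer sequence, hence the smaller quotient); this is \Cref{Join}. The position-wise maximum $\a\vee\b$ (the shorter sequence) is only established for $m=1$ (\Cref{Meet}), and the paper explicitly states, in the example following \Cref{Meet}, that it does not know whether the quotient of a reduction of shape $\a\vee\b$ can always be made locally free even when both given reductions have locally free quotients. You propose the opposite assignment: $\a_1\vee\a_2$ for general $m$ and $\a_1\wedge\a_2$ only for $m=1$. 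So half of your plan attacks what is, as far as the paper knows, an open problem, and the other half omits the case $m\ge 2$ of the closure property that the subsemilattice claim actually requires. Your structural heuristic ("for $m\ge 2$ the deeper depth requirements can genuinely fail for the longer join shape") is likewise inverted: adjoining an extra lower-degree section to an existing reduction, which is what passing to $\a\wedge\b$ amounts to, is precisely what basic element theory controls in all codimensions $\le m$, whereas trading a low-degree section for a higher-degree one (passing to $\a\vee\b$) is where the obstruction for $m\ge 2$ lives.

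Secondarily, even for the correctly assigned operation, "generic choice in the parameter space of matrices plus a Bertini-type argument" does not by itself constitute a proof. Openness of the $m$-reduction condition in a family of morphisms is the easy part (it is \Cref{Replace}, proved by semicontinuity of the Fitting locus over the base); the entire difficulty is exhibiting a single good point, since a priori the good locus in your parameter space could be empty. The paper produces this witness by induction on the number of positions where the shapes differ: after using \Cref{Replace} to make the sections in the common degrees literally coincide, it shows that $t_1,\dots,t_v,s_{\epsilon+1}$ are basic at all points of codimension $\le m-1$ (via a snake-lemma factorization argument resting on torsion-freeness of the restricted cokernel), and then repairs the finitely many bad codimension-$m$ points by elementary modifications $t_i\mapsto t_i+r_i t_{\epsilon+1}$ with $r_i\in H^0(\O(b_i-b_{\epsilon+1}))$, which is where the inequality $a_{\epsilon+1}<b_{\epsilon+1}$ and the degree bookkeeping enter. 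None of this mechanism appears in your sketch; you flag it yourself as the place "where most of the technical work will concentrate," but that work is the proof.
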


A semilattice is a poset with meet. 
A subsemilattice is a subposet inheriting the same meet from the ambient semilattice, analogously for a sublattice. 
The conclusions of \Cref{Main1} follow immediately from \Cref{Join} and \Cref{Meet}, whose proofs will occupy the remainder of the section.

\bigskip

The next lemma is a generalization of \cite[Lemma 3.6]{Nollet96}, where the quotients are required to have rank one and satisfy $(S_1^+)$.  
Essentially we prove that $m$-reductions are open among the affine variety of morphisms.

\begin{lemma}\label{Replace}
Let $\phi,\psi$  be $m$-reduction of $\E$ with shapes $(a_i)_{i = 1}^u$ and $(b_i)_{i = 1}^v$.
Denote by $J \subseteq\{1,\dots, \min(u,v)\}$ the subset of indices where $a_j = b_j$ for all $j \in J$.
There are $m$-reductions $\phi',\psi'$ of $\E$ with shapes $(a_i)_{i = 1}^u$ and $(b_i)_{i = 1}^v$, such that
if $s_1',\dots, s_u'$ and $t_1',\dots, t_v'$ are twisted sections of $\E$ corresponding to $\phi'$ and $\psi'$ respectively, then $s_j' = t_j'$ for all $j\in J$.
\end{lemma}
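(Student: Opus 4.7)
The plan is to realize the desired pair $(\phi',\psi')$ as a common point of two nonempty Zariski opens inside an irreducible parameter variety, exploiting the openness of the $m$-reduction condition.

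Form the affine parameter space
\[
V \;:=\; \prod_{j\in J} H^0(\E(a_j)) \,\times\, \prod_{\substack{1\le i \le u \\ i\notin J}} H^0(\E(a_i)) \,\times\, \prod_{\substack{1\le k \le v \\ k\notin J}} H^0(\E(b_k)).
\]
Every point $\nu=(c_j;\,s_i;\,t_k)\in V$ simultaneously determines a morphism $\phi_\nu:\O(\a)\to \E$ whose $j$-th component is $c_j$ for $j\in J$ and $s_i$ for $i\notin J$, and a morphism $\psi_\nu:\O(\b)\to \E$ whose $j$-th component is $c_j$ for $j\in J$ and $t_k$ for $k\notin J$. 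By construction the $j$-th components of $\phi_\nu$ and $\psi_\nu$ agree for every $j\in J$. Let $U_\phi,U_\psi\subseteq V$ denote the loci where $\phi_\nu$, resp.\ $\psi_\nu$, defines an $m$-reduction of $\E$.

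The crux is to show that $U_\phi$ and $U_\psi$ are both \emph{nonempty Zariski opens} of $V$. Nonemptiness is immediate: choosing $c_j$ to be the $j$-th section of $\phi$, the $s_i$ to be its remaining sections, and the $t_k$ arbitrary yields $\phi_\nu=\phi$, so $\nu\in U_\phi$, and symmetrically for $U_\psi$. For openness, injectivity of $\phi_\nu$ is open on $\nu$ because its failure is cut out by the vanishing of a maximal minor of the matrix of sections. The condition that $\coker\phi_\nu$ satisfies $(S_m^+)$ is also open, because on the injective locus $\coker\phi_\nu$ assembles into a flat family in $\nu$ (the relevant $\op{Tor}$ vanishes since $\O(\a)$ is locally free), and in flat families of coherent sheaves the $(S_m^+)$ locus is open by upper semi-continuity of depth combined with openness of the locally-free locus.

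Since $V$ is an irreducible affine variety over the infinite field $k$, the two nonempty Zariski opens $U_\phi$ and $U_\psi$ share a $k$-point $\nu^*$; setting $\phi':=\phi_{\nu^*}$ and $\psi':=\psi_{\nu^*}$ yields the required pair of $m$-reductions. The main obstacle will be verifying openness of the $(S_m^+)$-cokernel condition on $V$: beyond producing flatness on the injectivity locus, the argument has to stratify $X$ by codimension and invoke semi-continuity of depth together with openness of the locally-free locus along the codimension-$m$ skeleton, which is delicate but essentially standard machinery for families of coherent sheaves.
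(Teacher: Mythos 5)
Your proposal is correct and is essentially the paper's argument: both parametrize choices of sections by an affine space, show that the locus where the resulting map is an $m$-reduction is a nonempty Zariski open, and then intersect dense opens over the infinite field $k$ (the paper perturbs one section at a time and iterates over $j\in J$ rather than varying everything at once, which changes nothing of substance). The one step you defer as "standard machinery" --- openness of the $(S_m^+)$-cokernel condition --- is exactly where the paper does its concrete work, namely via the Fitting ideal $\Fitt_{r-u}(\coker \Phi)$ together with semicontinuity of fiber dimension for the proper map $X\times_k \A \to \A$, after which the depth half of $(S_m^+)$ comes for free from the depth lemma applied to $0 \to \O(\a)\otimes k(p) \to \E\otimes k(p) \to \coker\Phi_p \to 0$ rather than from any semicontinuity of depth.
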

\begin{proof}
Let $s_1,\dots, s_u$ be sections of $\E$ in degrees $a_1,\dots, a_u$ corresponding to $\phi$. 
Let $\epsilon$ be an arbitrary index. 
We claim that for a general choice of sections $s_\epsilon'$ of $\E$ in degree $a_\epsilon$, the map $\phi':\O(\a)\to \E$ given by $s_1,\dots,s_\epsilon', \dots ,s_u$ is an $m$-reduction of $\E$.
The conclusion of the lemma then follows by replacing the sections $s_j$ and $t_j$ by a common general section of $\E$ of degree $a_j = b_j$ for every $j \in J$.
 
Let $V$ be the finite dimensional $k$-vector space $H^0(\E(a_\epsilon))$, and let $\A = \Spec \op{Sym} V^*$ be the affine space parametrizing these sections. 
Consider the scheme $X' = X \times_k \A$ and the pullback sheaves $\E'$ of $\E$ as well as $\O(\a)'$ of $\O(\a)$ from $X$. 
There is a map $\Phi: \O(\a)' \to \E'$ defined by the following property. 
Suppose $p$ is a $k$-point of $\A$ corresponding to the sections $s_\epsilon'$ in $V$, then the fiber $\Phi_p : \O(\a) \to \E$ at $p$ is given by the sections $s_1,\dots, s_\epsilon',\dots, s_u$.
Let $Z$ denote the subscheme of $X'$ cut out by the Fitting ideal $\Fitt_{r-u}(\coker \Phi)$. 
The subscheme $Z$ contains points in $X'$ where $\coker \Phi$ cannot be generated by $\le r-u$ elements locally.
Since $X'$ is proper over $\A$, semicontinuity of fiber dimensions over the target implies that the points $p\in A$ such that $Z_p$ has dimension $< \dim X -m$ are open in $A$. 
Since $X$ is a projective variety over a field, dimension and codimension are complementary by Noether normalization. 
It follows that the set of points $p\in \A$ such that $\op{codim}(Z_p,X \otimes_k k(p)) > m$ form an open set $U$. 
If $p\in \A$ is such a point, then $\coker \Phi_p$ has rank $r$ on $X\otimes_k k(p)$ and therefore we have an exact sequence
\[
0 \to \bigoplus_{i = 1}^u \O(-a_i) \otimes_k k(p) \to \E \otimes_k k(p) \to \coker \Phi_p \to 0.
\]
Since $\coker \Phi_p$ is locally-free in codimension $m$, it satisfies $(S_m^+)$ by an application of the depth lemma to the above sequence. 
Finally, the existence of $\phi$ gives us a $k$-point of $U$.
Since $k$ is infinite, it follows that the $k$-points of $U$ are dense in $V$. 
\end{proof}

To simplify the language in the next proof, we recall the definition of basic elements.

\begin{definition}
A subsheaf $\E'$ of $\E$ is \emph{$w$-basic} at $x\in X$ if 
\[
\dim_{k(x)} (\E/\E')\otimes k(x) \le \dim_{k(x)} \E \otimes k(x)-w.
\] 
Let $s_1,\dots, s_u$ be twisted sections of $\E$ corresponding to a map $\phi:\O(\a) \to \E$. 
We write $(s_1,\dots, s_u)$ for the subsheaf $\im \phi$ and say that $s_1,\dots, s_u$ are \emph{basic in $\E$ at $x$} if $(s_1,\dots, s_u)$ is $u$-basic in $\E$ at $x$. 
\end{definition}

By Nakayama's lemma, a subsheaf $\E'$ is $w$-basic in $\E$ at $x$ iff $\E'$ contains at least $w$ of a system of minimal generators of $\E$ at $x$. 
If $\E$ is locally free at $x \in X$ and $s_1,\dots, s_u$ are basic in $\E$ at $x$, then $\E/(s_1,\dots, s_u)$ is also locally-free at $x$.

\begin{theorem}\label{Join}
For a fixed $m\ge 1$, if there are $m$-reductions $\phi,\psi$ of $\E$ with shapes $(a_i)_{i = 1}^u$ and $(b_i)_{i = 1}^v$, then there is an $m$-reduction $\xi$ of $\E$ with shape $(a_i)_{i = 1}^u\wedge (b_i)_{i = 1}^v$.
\end{theorem}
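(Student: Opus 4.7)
My plan is an inductive refinement of the parametric argument in \Cref{Replace}. At each step I modify one section of $\phi$ using a section of $\psi$, reducing a numerical distance from $\a$ to $\c := \a\wedge\b$. Pad the shorter of $\a,\b$ by $+\infty$ to the common length $\ell := \max(u,v)$, so $c_k = \min(a_k, b_k)$; set $N(\a,\b) := \#\{k : b_k < a_k\}$ and proceed by induction on $N$.

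In the base case $N = 0$ we have $\a = \c$ and $\xi := \phi$ works. For the inductive step, let $k$ be the least index with $b_k < a_k$ (allowing $a_k = \infty$ when $k > u$), and let $\a'$ be obtained from $\a$ by replacing $a_k$ with $b_k$ (or appending $b_k$ in the case $k > u$). Since $\b$ is non-decreasing and $k$ is minimal, $b_k \ge b_{k-1} \ge a_{k-1}$ and (when $k \le u$) $b_k < a_k \le a_{k+1}$, so no re-sort is needed: $\a'$ is automatically non-decreasing. A direct computation with $\Sigma(-,l)$ shows $\a \le \a' \le \c$ and $N(\a',\b) = N(\a,\b) - 1$, so it suffices to build an $m$-reduction $\phi'$ of $\E$ with shape $\a'$ and apply the induction hypothesis to $(\phi',\psi)$.

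To produce $\phi'$, form $V := H^0(\E(b_k))$ and the parameter space $\A := \Spec \op{Sym} V^*$, pull $\E$ and $\O(\a')$ back to $X' := X \times_k \A$, and consider the universal map $\Phi$ whose fibre over $s'\in V$ is the map with sections $(s_1,\dots,s_{k-1},s',s_{k+1},\dots,s_u)$ (or $(s_1,\dots,s_u,s')$ in the appending case). Exactly as in the proof of \Cref{Replace}, semicontinuity applied to $\Fitt_{r-u}(\coker \Phi)$ together with the codimension criterion produce an open locus $U\subseteq\A$ on which $\Phi_p$ is an $m$-reduction. The crux, and the main obstacle, is to show $U \ne \emptyset$. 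The natural candidate is $s' := t_k$: after normalising $\phi$ and $\psi$ via \Cref{Replace} so they share sections at the common positions $\{j : a_j = b_j\}$, I aim to verify that $\Phi_{t_k}$ is injective with $(S_m^+)$ cokernel. This should follow from the basic-section framework and the hypotheses that $\coker \phi$, $\coker \psi$ are $(S_m^+)$: the sections $s_1,\dots,s_{k-1},s_{k+1},\dots,s_u$ are basic off a codimension-$m$ locus coming from $\phi$, and $t_1,\dots,t_k$ are basic off the analogous locus from $\psi$, so replacing (or augmenting with) $t_k$ should preserve basicness in codimension $m$. If the bare witness $s' = t_k$ is insufficient, one enlarges $V$ to include the twists $h\cdot s_i$ for $h\in H^0(\O(b_k - a_i))$ with $a_i \le b_k$ and takes a generic element; such a perturbation leaves $\im \Phi_p$ unchanged modulo the other $s_i$'s but provides the additional generic room the Fitting-ideal argument needs.
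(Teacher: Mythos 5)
Your overall architecture (induct on the number of positions where the current shape differs from $\a\wedge\b$, and at each step realize one more entry of the meet via a parametric generic-section argument as in \Cref{Replace}) is the same as the paper's, but the inductive step has a genuine gap at exactly the point you flag as ``the crux''. You choose $k$ to be the \emph{least index with $b_k<a_k$} and try to replace $s_k$ by (a perturbation of) $t_k$. The offered justification --- that $s_1,\dots,\hat{s_k},\dots,s_u$ are basic off one codimension-$m$ locus and $t_1,\dots,t_k$ off another, so the mixed collection ``should'' stay basic --- is not a valid inference: basicness of two collections separately says nothing about a mixture, and nothing prevents $t_k$ from lying in the subsheaf generated by $s_1,\dots,s_{k-1}$ at points of codimension $\le m-1$, or even generically (in which case $\Phi_{t_k}$ is not injective at all). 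The normalization via \Cref{Replace} only identifies $s_j$ with $t_j$ where $a_j=b_j$; at positions $j<k$ you may have $a_j<b_j$ strictly, so those $s_j$ and $t_j$ are unrelated and no contradiction with the basicness of $\psi$'s sections is available. Your fallback --- perturbing $t_k$ by $h\cdot s_i$ for the $i$ with $a_i\le b_k$ --- is vacuous: every such $i$ satisfies $i<k$ (since $a_i\le b_k<a_k$ and $\a$ is sorted), so these perturbations lie in the span of the retained sections and leave the image subsheaf, hence the Fitting ideal of the cokernel, literally unchanged. Finally, your intermediate targets $\a'$ are shapes strictly between $\a$ and $\a\wedge\b$ which the theorem does not assert to be achievable, so the induction may be aiming at reductions that need not exist.

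The paper's proof avoids all of this by working at the \emph{first position of disagreement} $\epsilon+1$ (with $a_{\epsilon+1}<b_{\epsilon+1}$ after possibly swapping $\phi$ and $\psi$), so that \Cref{Replace} lets one take $s_j=t_j$ for all $j\le\epsilon$. Adding the lower-degree section $s_{\epsilon+1}$ to the \emph{full} collection $t_1,\dots,t_v$ is then shown to preserve basicness in codimension $\le m-1$ by a factorization argument: a failure forces $s_{\epsilon+1}$ to factor through $\bigoplus_{i\le\epsilon}\O_Y(-b_i)$ for degree reasons ($a_{\epsilon+1}<b_i$ for $i>\epsilon$), contradicting the basicness of $s_1,\dots,s_{\epsilon+1}$ precisely because the first $\epsilon$ sections have been identified. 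The finitely many remaining bad codimension-$m$ points are repaired by perturbing the tail sections $t_i$ ($i\ge\epsilon+2$) by multiples of the \emph{dropped} section $t_{\epsilon+1}$ --- perturbations that genuinely change the image subsheaf. If you replace your choice of $k$ by the first index where $\a$ and $\b$ disagree (swapping the roles of $\phi$ and $\psi$ when $a_k<b_k$), your argument essentially collapses into the paper's; as written, the construction of $\phi'$ is unproven and the mechanisms proposed to establish it do not work.
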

\begin{proof}
By induction, we may reduce to proving the following statement.
Set $\epsilon$ to be the largest integer in the interval $[0,\min(u,v)]$ such that $a_i = b_i$ for all $1\le i\le \epsilon$.
If $\epsilon = \min(u,v)$, then $(a_i)_{i = 1}^u$ is a subsequence of $(b_i)_{i = 1}^v$ or vice versa, and their join is just the longer sequence among the two.
The statement of the theorem is true in this case.
Assume without loss of generality that $\epsilon < \min(u,v)$ and $b_{\epsilon+1} > a_{\epsilon+1}$. 
We claim that there exists an $m$-reduction of $\E$ with shape $a_1,\dots, a_{\epsilon+1}, b_{\epsilon+2},\dots, b_v$.

 \textbf{Step 1:} Using \Cref{Replace}, we may assume $\phi$ and $\psi$ are given by twisted sections $s_1,\dots, s_u$ and $t_1,\dots, t_v$ respectively, such that $s_j = t_j$ for all $1\le j\le \epsilon$.

\textbf{Step 2:} 
Let $Y$ be any subvariety of $X$ of codimension $<m$, we claim that $(\coker \psi)_Y$ is torsion-free on $Y$. 
Since $\coker \psi$ satisfies $(S_m^+)$, we see that $(\coker \psi)_Y$ is locally-free in codimension one on $Y$. 
By Krull's principal ideal theorem, if a module $M$ has a zerodivisor $r$, then any minimal prime $P$ above $(r)$ has height one.
In particular, the image of $r$ in the localization would remain a zerodivisor on $M_P$. 
Since $(\coker \psi)_Y$ is locally-free in codimension one on $Y$, we conclude that $(\coker \psi)_Y$ must be torsion-free on $Y$.

\textbf{Step 3:} 
We claim that $t_1,\dots, t_v, s_{\epsilon+1}$ are basic in $\E$ at all points of codimension $\le m-1$. 
Suppose not, let $y \in X$ be a point of codimension $\le m-1$ and let $Y = \overline{\{y\}}$ be the corresponding subvariety. 
If $t_1,\dots, t_v, s_{\epsilon+1}$ are not basic in $\E$ at $y$, then the image of the corresponding map $\psi_Y':\bigoplus_{i = 1}^v \O_Y(-b_i) \oplus \O_Y(-a_{\epsilon+1}) \to \E_Y$ has rank $v$ on $Y$, the same rank as $\im \psi_Y$. 
We obtain the following commutative diagram of exact sequences
\[
\begin{tikzcd}
0 \arrow[r] & \bigoplus_{i = 1}^v \O_Y(-b_i) \arrow[d,"\alpha"]\arrow[r,"\psi_Y"] & \E_Y\arrow{r} \arrow[d,equal] & (\coker \psi)_Y \arrow{r} \arrow[d,"\beta"] & 0\\
0 \arrow{r} & \im \psi_Y' \arrow{r} & \E_Y \arrow{r} & \coker \psi_Y' \arrow{r}  & 0.
\end{tikzcd}
\]
The upper complex is exact because it is exact at the generic point $y$ of $Y$ and $\bigoplus_{i = 1}^v \O_Y(-b_i)$ is torsion-free.
The snake lemma implies that $\coker \alpha \cong \ker \beta$, which vanishes at the generic point $y$ of $Y$ since $\rank \im \psi_Y' = v$, therefore $\ker \beta$ is torsion on $Y$. 
Since $(\coker \psi)_Y$ is torsion-free by the above step, we conclude that $\ker \beta= 0$. 
This means that $\psi_Y'$ factors through $\psi_Y$. 
However, any map from $\O_Y(-a_{\epsilon+1})$ to $\O_Y(-b_i)$ is zero for $i > \epsilon$ since $X$ is integral and $\O(1)$ is very ample.
It follows that $\O_Y(-a_{\epsilon+1}) \to \E_Y$ factors through $\bigoplus_{i = 1}^\epsilon \O_Y(-b_i) \to \E_Y$. 
This means that $t_1,\dots, t_\epsilon, s_{\epsilon+1}$ are not basic at $y$, which is a contradiction since $t_j = s_j$ for all $1\le j\le \epsilon$ and $s_1,\dots, s_{\epsilon+1}$ are basic at $y$.

\textbf{Step 4:} 
Let $Z$ be the subscheme defined by the Fitting ideal $\Fitt_{r-1}(\E/(t_1,\dots, t_v, s_{\epsilon+1}))$, where $r = \rank \coker \psi$. 
The subscheme $Z$ contains all points in $X$ where $\E/(t_1,\dots, t_v, s_{\epsilon+1})$ cannot be generated by $\le r-1$ elements.
Since $\E$ is locally-free in codimension $m$, we see that $Z$ contains no point in $X$ of codimension $\le m-1$ by the previous step.
Therefore $Z$ contains at most finitely many points in $X$ of codimension $m$. 
Let $B$ denote this finite set of points of codimension $m$ in $X$ where $t_1,\dots, t_v, s_{\epsilon+1}$ are not basic in $\E$.

The idea is to fix the basicness of $s_1,\dots, s_{\epsilon+1}, t_{\epsilon+2},\dots, t_v$ at one point in $B$ at a time by modifying a section $t_i$ to $t_i+r_i t_{\epsilon+1}$ for some suitable $r_i\in H^0(\O(b_i-b_{\epsilon+1}))$, without worsening the basicness at the remaining points in $B$. 

At each point $x\in B$, if $s_1,\dots, s_{\epsilon+1}, t_{\epsilon+2},\dots, t_v$ are basic in $\E$ then we do nothing.
If not, we can find $t_i$ for $\epsilon+2\le i\le v$ such that $s_1,\dots, s_{\epsilon+1}, t_{\epsilon+2},\dots, t_i$ have the same basicness in $\E$ at $x$ as $s_1,\dots, s_{\epsilon+1}, t_{\epsilon+2},\dots, t_{i-1}$. 
Since $\O(1)$ is very ample, there exists a form $r_i\in H^0(\O(b_i-b_{\epsilon+1}))$ that does not vanish at $x$. 
Let $\lambda\in k$ be an undetermined nonzero scalar, then $s_1,\dots, s_{\epsilon+1}, t_{\epsilon+2},\dots,t_i',\dots, t_v$ are basic in $\E$ at $x$, where $t_i' = t_i+\lambda r_i t_{\epsilon+1}$.  
By \cite[Lemma 1.2]{Basic}, for all but finitely many choices of $\lambda$ the sections $s_1,\dots, s_{\epsilon+1}, t_{\epsilon+2},\dots,t_i',\dots, t_v$ maintain the same amount of basicness as $s_1,\dots, s_{\epsilon+1}, t_{\epsilon+2},\dots, t_v$ at the remaining points in $B$. 
We choose such a nonzero $\lambda$ and go to the next point in $B$ with the modified sections $s_1,\dots, s_{\epsilon+1}, t_{\epsilon+2},\dots,t_i',\dots, t_v$ as input, and carry out the same procedure. 
Eventually, we arrive at sections $s_1,\dots, s_{\epsilon+1}, t'_{\epsilon+2},\dots, t_v'$ that are basic in $\E$ at all points in $B$, where $t_i' = t_i+r_i t_{\epsilon+1}$ for some $r_i\in H^0(\O(b_i-b_{\epsilon+1}))$. 
The sections $s_1,\dots, s_{\epsilon+1}, t'_{\epsilon+2},\dots, t_v'$ are basic in $\E$ at all points of codimension $\le m$ outside of $B$ since $t_1,\dots, t_v, s_{\epsilon+1}$ are basic in $\E$ at these points. 
It follows that the map $\xi: \bigoplus_{i = 1}^{\epsilon+1} \O(-a_i) \oplus \bigoplus_{i = \epsilon+2}^v \O(-b_i) \to \E$ corresponding to $s_1,\dots, s_{\epsilon+1}, t'_{\epsilon+2},\dots, t_v'$ is an $m$-reduction of $\E$.
\end{proof}

\Cref{Join} is a generalization of \cite[Lemma 2.1]{BBM91}.
The above proof for $(S_m^+)$ sheaves is more subtle.
At its core, \Cref{Join} is about the codimension of certain minors of matrix extensions.
The affine versions of these theorems are well-known results of basic element theory by Eisenbud-Evans \cite{EE72} and others.
The extension of these results to the projective case were proven in \cite{Basic}.
Note that this procedure gives us a way to make new bundles from old ones.

\bigskip

The next theorem is similar in spirit with \Cref{Join}, but the proof requires a slightly different approach. 
We include the proof here for the sake of rigor and completeness.
\begin{theorem}\label{Meet}
If there are $1$-reductions $\phi,\psi$ of $\E$ with shapes $(a_i)_{i = 1}^u$ and $(b_i)_{i = 1}^v$, then there is a $1$-reduction $\xi$ of $\E$ with shape $(a_i)_{i = 1}^u\vee(b_i)_{i = 1}^v$.
\end{theorem}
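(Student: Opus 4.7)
The plan is to follow the strategy of \Cref{Join}, but to \emph{replace} a section of $\phi$ by one from $\psi$ in each inductive step rather than \emph{add} a new section. First, if $u>v$, the restriction of $\phi$ to its first $v$ summands remains a $1$-reduction — its cokernel is an extension of $\coker\phi$ by $\bigoplus_{i=v+1}^u\O(-a_i)$, hence $(S_1^+)$ — and has the same meet with $\b$; so I may assume $u\le v$. Then I induct on $D:=\sum_{i=1}^u\max(b_i-a_i,0)$. The base case $D=0$ gives $\a=\a\vee\b$, and $\phi$ itself works. For $D>0$, let $\epsilon$ be the largest integer with $a_j=b_j$ for $j\le\epsilon$; then $\epsilon<u$ and, after swapping $\phi,\psi$ if needed, $a_{\epsilon+1}<b_{\epsilon+1}$. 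By \Cref{Replace}, assume $s_j=t_j$ for $j\le\epsilon$, and it suffices to construct a $1$-reduction of the sorted sequence $\a'$ obtained from $\a$ by replacing $a_{\epsilon+1}$ with $b_{\epsilon+1}$: one checks $\a'\vee\b=\a\vee\b$ and $D$ drops by $b_{\epsilon+1}-a_{\epsilon+1}>0$, so the induction applies.

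For this single step, I will consider the affine family of maps $\xi_f:\O(\a')\to\E$, parametrized by $f\in V:=H^0(\O(b_{\epsilon+1}-a_{\epsilon+1}))$ and given by the twisted sections
\[
s_1,\dots,s_\epsilon,\ t_{\epsilon+1}+f\cdot s_{\epsilon+1},\ s_{\epsilon+2},\dots,s_u.
\]
The $t_{\epsilon+1}$ term supplies the needed degree-$b_{\epsilon+1}$ replacement, while the freedom in $f$ will correct failures of basicness. Openness of the good locus $\{f\in V:\xi_f\text{ is a $1$-reduction}\}$ follows exactly as in \Cref{Replace} from the semicontinuity of Fitting ideals on the family over $X\times V$; the hard part, and the real difference from \Cref{Join}, is to show that this open set is nonempty.

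To establish nonemptiness I will analyze failures of basicness stratum by stratum. At the generic point $\eta$, in the quotient $\bar\E:=\E_\eta/\op{span}\{s_j(\eta):j\ne\epsilon+1\}$ the class $\bar s_{\epsilon+1}(\eta)$ is nonzero, so $\bar t_{\epsilon+1}(\eta)+f(\eta)\bar s_{\epsilon+1}(\eta)=0$ imposes at most one value of $f(\eta)\in K(X)$; since the evaluation $V\hookrightarrow K(X)$ is injective, this excludes at most a single element of $V$. At each codimension-$1$ point $y$, basicness of the candidate sections in the free fiber $\E_y\otimes k(y)$ fails exactly when $\bar t_{\epsilon+1}(y)$ is parallel to $\bar s_{\epsilon+1}(y)$ in the corresponding quotient and $f(y)=-c_y$ for the resulting ratio $c_y\in k(y)$. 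The parallelism locus $W\subseteq X$ is closed, so its codimension-$1$ components $Y_1,\dots,Y_N$ are finite in number; along each $Y_i$ the condition $f|_{Y_i}=-c_i$ cuts out a proper affine subspace of $V$, because the very-ampleness of $\O(1)$ makes $\O(b_{\epsilon+1}-a_{\epsilon+1})$ base-point-free and so the restriction $V\to H^0(Y_i,\O_{Y_i}(b_{\epsilon+1}-a_{\epsilon+1}))$ is nonzero. A generic $f\in V$ avoids these finitely many bad affine subspaces, producing a $1$-reduction $\xi_f$ of shape $\a'$, and the induction concludes the proof.
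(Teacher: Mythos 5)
Your overall scheme---reduce to $u\le v$, then promote one entry of $\a$ at a time by trading $s_{\epsilon+1}$ for $t_{\epsilon+1}+f\cdot s_{\epsilon+1}$ with $f$ generic---is the same one-step-at-a-time strategy the paper uses, and your combinatorial bookkeeping ($\a'\vee\b=\a\vee\b$, openness of the good locus in $V$) is sound. The gap is in the nonemptiness argument: you implicitly assume the parallelism locus $W$ is a \emph{proper} closed subset of $X$. If $t_{\epsilon+1}(\eta)$ already lies in $\op{span}_{K(X)}(s_1(\eta),\dots,s_u(\eta))$, then $W=X$, \emph{every} codimension-one point is a parallelism point, and the ``finitely many bad affine subspaces'' count collapses. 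Worse, in that case no $f$ works at all: writing $\bar t_{\epsilon+1}=c\,\bar s_{\epsilon+1}$ with $c$ a rational section of $\O(b_{\epsilon+1}-a_{\epsilon+1})$, your candidate section becomes $(c+f)\,\bar s_{\epsilon+1}$, and $c+f$ is a rational section of a line bundle of positive degree, hence has a nonempty zero divisor; at the generic point of that divisor the candidate sections fail to be basic. Concretely, take $\E=\O(-a_2)\oplus\O(-b_2)\oplus\O(-a_1)$ with $a_1<a_2<b_2$, let $\phi$ be given by $s_1=(0,0,1)$, $s_2=(1,0,0)$ and $\psi$ by $t_1=s_2$, $t_2=(0,1,0)$: here $\epsilon=0$, and $t_1+f s_1=(1,0,f)$ together with $s_2=(1,0,0)$ drop rank along $V(f)\ne\emptyset$ for every $f\in H^0(\O(a_2-a_1))$, so no $\xi_f$ of shape $(a_2,a_2)$ arises this way.

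This degenerate case is precisely what the paper's proof spends its effort excluding: it does \emph{not} insist on the first differing index, but first proves that \emph{some} differing index $\epsilon$ has $s_1,\dots,s_u,t_\epsilon$ basic at the generic point --- otherwise every $\O(\a)\oplus\O(-b_i)\to\E$ factors through $\phi$, hence $\psi$ factors through $\phi$ via a square matrix that must drop rank in codimension one since $\a\ne\b$, contradicting that $\psi$ is a $1$-reduction --- and only then performs the replacement $t_\epsilon\mapsto t_\epsilon+r s_\epsilon$ at \emph{that} index. You need to add this existence argument and let the promoted entry be dictated by the good index rather than by position $\epsilon+1$; the identity $\c\vee\b=\a\vee\b$ still holds for any index with $b_\epsilon>a_\epsilon$, so the combinatorics survive. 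A secondary, easily repaired defect: your induction quantity $\sum_i\max(b_i-a_i,0)$ is not symmetric under the ``swap $\phi,\psi$ if needed'' step and can increase across it (e.g.\ $\a=(5,6,7)$, $\b=(1,2,8)$), so the induction as stated is not well founded; induct instead on the symmetric quantity $\sum_i|a_i-b_i|$, which strictly drops at each step.
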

\begin{proof}
Without loss of generality, assume that $u \le v$. 
By the remark below \Cref{ExLattice}, we see that $(a_i)_{1 = 1}^u \vee (b_i)_{i = 1}^v = (a_i)_{1 = 1}^u \vee (b_i)_{i = 1}^u$.
Certainly $\psi': \bigoplus_{i = 1}^v \O(-a_i)\to \E$ is an $m$-reduction of $\E$ if $\psi$ is. 
We thus reduce to the case where $u =v$. 

Let $D(\phi,\psi)$ denote the number of indicies where the shapes of $\phi$ and $\psi$ differ.
We prove the assertion by induction on $D(\phi,\psi)$.
When $D(\phi,\psi) = 0$ there is nothing to prove. 
Suppose $D(\phi,\psi)>0$. 
Let $s_1,\dots,s_u$ and $t_1,\dots, t_u$ be sections of $\E$ corresponding to $\phi$ and $\psi$ respectively. 
Let $J\subseteq \{1,\dots, u\}$ be the subset of indicies $j$ where $a_j = b_j$. 
By \Cref{Replace}, we may assume that $s_j = t_j$ for all $j\in J$.
We claim that there is an index $\epsilon \in \{1,\dots, u\} - J$ where $s_1,\dots, s_u, t_\epsilon$ are basic in $\E$ at the generic point $\eta$ of $X$. 
Suppose not, then every map $ \O(\a)\oplus \O(-b_i) \to \E$ factors through $\phi: \O(\a) \to \E$ by the same argument in step 3 of \Cref{Meet}. 
This would mean that $\psi:\O(\b) \to \E$ factors through $\phi:\O(\a) \to \E$. 
But the factor map $\O(\b) \to \O(\a)$ must drop rank along the determinant hypersurface since $\a\ne \b$, and thus so must $\psi$. 
This is a contradiction to the fact that $\psi$ does not drop rank in codimension one. 
Thus we find $\epsilon$ such that $s_1,\dots, s_u, t_\epsilon$ are basic in $\E$ at the generic point. 
We may assume without loss of generality that $b_\epsilon > a_\epsilon$, otherwise we reverse the role of $\phi$ and $\psi$. 
The same argument in step 4 of \Cref{Join} shows that $s_1,\dots, s_u, t_\epsilon$ are basic in $\E$ at all but finitely many codimension one points in $X$, and thus so are $s_1,\dots, \hat{s_\epsilon},\dots, s_u, t_\epsilon$. 
Carrying out the same procedure in step 4 of \Cref{Join}, we can find a suitable $r\in H^0(\O(b_\epsilon-a_\epsilon))$ such that $s_1,\dots, \hat{s_\epsilon},\dots, s_u, t_\epsilon'$ are basic at all points in $X$ of codimension $\le 1$, where $t_\epsilon' = t_\epsilon+r\cdot s_\epsilon$. 
The corresponding map $\xi:\O(-b_\epsilon)\oplus \bigoplus_{i\ne \epsilon} \O(-a_i) \to \E$ is thus a $1$-reduction. 
Since $D(\phi,\xi) < D(\phi,\psi)$, by induction we find a $1$-reduction $\eta$ of $\E$ with shape $\a \vee \c = \a\vee\b$, where $\c$ is the shape of $\xi$.
\end{proof}

\begin{example}
Continuing \Cref{ExLattice}.
Suppose there are extensions 
\[
0 \to \O(-1)\oplus \O(-3)\oplus \O(-4) \to \E \to \F \to 0
\]
\[
0 \to \O(-2)\oplus \O(-2) \to \E \to \F' \to 0
\]
where $\F,\F'$ are locally-free.
By \Cref{Join}, there is an extension of the form
\[
0 \to \O(-1)\oplus \O(-2) \oplus \O(-4) \to \E \to \F'' \to 0
\]
where $\F''$ is locally-free. 
By \Cref{Meet}, there is an extension of the form
\[
0 \to \O(-2)\oplus \O(-3) \to \E \to \F''' \to 0
\]
where $\F'''$ is $(S_1^+)$.
We do not know if we can always make $\F'''$ locally-free.
\end{example}

\section{Biliaison of sheaves}
In this section, we define the biliaison equivalence of sheaves.
We prove a weak version of the structure theorem for a biliaison class, which says that $(S_m^+)$ sheaves in a biliaison class can be obtained from one another using certain deformations and other basic moves.

\begin{definition}\label{Descendant}

If there is an extension of the form 
\[
0 \to \O(\a) \to \E \oplus \O(\b) \to \F \to 0,
\]
then we say $\F$ is a \emph{descendant} of $\E$ and $\E$ is an \emph{ancestor} of $\F$.
Let $\D(\E)$ denote the collection of all descendants of $\E$, and let $\D_m(\E)$ denote the collection of all descendants of $\E$ that satisfy $(S_m^+)$.  
Two sheaves $\F$ and $\G$ are \emph{related}, written $\F\sim \G$, if they share a common ancestor.
The equivalence relation among sheaves on $X$ generated by $\sim$ is called \emph{biliaison equivalence}.
\end{definition}

Note that if $\F\in \D_m(\E)$, then $\E\oplus \O(\a)$ satisfies $(S_m^+)$ for some $\a$. 
By the characterization of depth using vanishing of local cohomologies, we have $\depth \E_x = \depth (\E\oplus \O(\a))_x$ for any $x\in X$.
We see that $\E$ satisfies $(S_m^+)$ iff $\E\oplus \O(\a)$ satisfies $(S_m^+)$ for some $\a$.
Therefore a sufficient and necessary condition for $\D_m(\E)$ to be nonempty is that $\E$ satisfies $(S_m^+)$.

\bigskip

\Cref{Descendant} is motivated by the following well-known theorem of Rao \cite{Rao81}, strengthened in \cite{Nollet96} and \cite{Hartshorne03}.

\begin{theorem*}[Rao-Nollet-Hartshorne]
Suppose $H^1_*(\O_X) = 0$. 
Two  pure codimension two subschemes $Y,Z$ of $X$ are evenly linked iff $\I_Y, \I_Z(\delta) \in \D_1(\F)$ for a sheaf $\F$ and an integer $\delta$.
If $X$ is Gorenstein in codimension two, then $\F$ can be chosen to be reflexive. 
If $X$ is regular, then $Y$ is Cohen-Macaulay iff $Z$ is Cohen-Macaulay iff $\F$ can be chosen to be locally-free.
\end{theorem*}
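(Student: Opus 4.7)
The plan is to prove the biconditional in two directions using the classical dictionary between a complete-intersection link and a short exact sequence on ideal sheaves.

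For the direction ``evenly linked $\Rightarrow$ common ancestor'', I would induct on the length of the chain of links and reduce to a single double link $Y \to Y' \to Z$. For each direct link $W \to W'$ by a complete intersection $CI$ of type $(a,b)$, the Koszul resolution of $\O_{CI}$ together with the linkage duality $\I_{W'/CI} \cong \sHom_{\O_{CI}}(\O_W, \omega_{CI})$ (which is where the Gorenstein hypothesis on $CI$ is used) yields, via the standard mapping-cone construction, a short exact sequence expressing $\I_{W'}$, up to a twist, as a descendant of $\I_W$ in the sense of \Cref{Descendant}. Splicing the two such sequences for the two links along $\I_{Y'}$ and cancelling intermediate line-bundle summands via \Cref{Main1} produces a common ancestor $\F$ together with extensions
\[
0 \to \O(\a) \to \F \oplus \O(\b) \to \I_Y \to 0, \qquad 0 \to \O(\a') \to \F \oplus \O(\b') \to \I_Z(\delta) \to 0.
\]
The hypothesis $H^1_*(\O_X) = 0$ enters here to guarantee that the relevant extension classes are captured by the expected $\Ext^1$ groups and that $\sim$ is genuinely an equivalence rather than merely a relation.

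For the converse, I would reduce the abstract descendant relation to a chain of \emph{elementary biliaisons}, namely extensions $0 \to \O(-h) \to \F' \to \I_{W'} \to 0$ in which $\F'$ has rank two. Two regular sections of such an $\F'$ in degrees $h$ and $h'$ cut out subschemes $W'$ and $W''$ on the vanishing hypersurfaces of the respective sections, and a short diagram-chase identifies $W''$ with the residual of $W'$ inside the complete intersection formed by these two hypersurfaces; this geometrically realizes the passage between $\I_{W'}$ and $\I_{W''}(h'-h)$ as a double link. \Cref{Replace} and \Cref{Main1} let one normalize any chain of descendant extensions into a chain of elementary biliaisons of this shape, and iterating along such a chain inside $\D_1(\F)$ assembles an even linkage between $Y$ and $Z$.

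For the refinements, if $X$ is Gorenstein in codimension two then $(S_2^+)$ coincides with reflexivity, so replacing $\F$ by its reflexive hull — equivalently, the primitive representative of its stable equivalence class — yields a reflexive witness. If $X$ is regular, the depth lemma applied to any witnessing extension shows that $\F$ can be chosen locally free precisely when $\I_Y$ (equivalently $Y$) is Cohen-Macaulay, and the symmetric role of $Y$ and $Z$ in the two extensions then forces the same conclusion for $Z$. The most delicate step, I expect, will be the reduction from general $\D_1$-extensions, which permit arbitrary line-bundle summands on both sides, to the rigid shape of an elementary biliaison used in the classical geometric picture; \Cref{Replace} and \Cref{Main1} supply the flexibility to cancel and substitute summands, but the bookkeeping of twists, signs, and directions of the Serre correspondences through the resulting chain of double links is exactly where the technical refinements of \cite{Nollet96} and \cite{Hartshorne03} over Rao's original argument come in.
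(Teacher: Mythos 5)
The paper does not prove this theorem: it is quoted verbatim from the literature (Rao \cite{Rao81}, strengthened in \cite{Nollet96} and \cite{Hartshorne03}) as motivation for \Cref{Descendant}, so there is no in-paper argument to compare against. Your outline does follow the classical route of those references — Koszul resolution plus linkage duality and the mapping cone for the forward direction, explicit geometric realization of elementary biliaisons as double links for the converse, and depth/Auslander--Buchsbaum considerations for the Cohen--Macaulay refinement — and at that level it is the right strategy.

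One step as written is wrong, though. For a single direct link $W \to W'$ by a complete intersection of type $(a,b)$, the mapping-cone construction does \emph{not} exhibit $\I_{W'}$ as a descendant of $\I_W$; it converts an $N$-type resolution $0 \to \E \to \L_0 \to \I_W \to 0$ into a resolution of $\I_{W'}$ whose non-free term is the \emph{dual} $\E^\vee(-a-b)$ (an $E$-type resolution). The descendant relation over a common sheaf $\F$ is only recovered after composing \emph{two} links, because the two dualizations cancel; this is exactly why the theorem concerns even linkage and why your phrase ``expressing $\I_{W'}$, up to a twist, as a descendant of $\I_W$'' cannot be taken literally at the level of a single link. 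Your subsequent splicing step presupposes the single-link output has the wrong form, so the bookkeeping there needs to be redone around the duality. Two smaller points: for the Gorenstein refinement, the reflexive witness is not obtained as a ``reflexive hull'' of an arbitrary ancestor but by constructing the extension $0 \to \O(\a) \to \F \to \I_Y \to 0$ so that $\O(\a) \to \Ext^1_*(\I_Y,\O_X)$ hits a generating set (the mechanism of \Cref{Primitive}(7), following \cite[Prop.~2.1]{Hartshorne03}), the kernel then being reflexive because it satisfies $(S_2)$ on a $(G_1)$, $(S_2)$ variety; and in the converse direction you should say explicitly how a general section of the rank-two ancestor is chosen so that the two hypersurfaces meet properly and the links are geometric, which is where the genericity arguments of \cite{Nollet96} enter.
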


We briefly recall what it means for two subschemes to be evenly linked for the unfamiliar readers.
We refer to \cite{Migliore98} for a complete treatment of liaison theory.

A \emph{(geometric) link} is a pair $(Y,Z)$ of pure codimension $r$ subschemes of $X$ that are residual to each other in a complete intersection. 
Two subschemes are \emph{evenly linked} if they can be obtained from one another using even number of links.

Equivalently, a link is a pair of (generalized) divisors $(Y,Z)$ on a codimension $(r-1)$ complete intersection $K$ such that $Y$ is linearly equivalent to $nH-Z$, where $H$ is the hyperplane class of $K$ and $n$ is an integer.
One may similarly define an \emph{elementary biliaison}: a pair of generalized divisors $(Y,Z)$ on a codimension $(r-1)$ complete intersection $K$ such that $Y$ is linearly equivalent to $Z+nH$ for some integer $n$.
We refer to \cite{Hartshorne94} for the theory of generalized divisors on a Gorenstein scheme to make sense of this definition.

\bigskip

We now generalize the notions of Serre correspondence and elemantary biliaison to sheaves.


\begin{definition}\label{BasicMoves}
An \emph{$(S_m^+)$-Serre correspondence} is an $m$-reduction of a sheaf $\E$ of the form $\varphi:\O(-a) \to \E$ for some integer $a$.
An \emph{elementary $(S_m^+)$-biliaison from $\F$ to $\G$} is a pair of $(S_m^+)$-Serre correspondences $\phi:\O(-a) \to \F$ and $\psi:\O(-b) \to \G$
where $\coker \phi \cong \coker \psi$.
The \emph{height} of the elementary biliaison $(\phi,\psi)$ is the integer $a-b$. 
The elementary $(S_m^+)$-biliaison is \emph{increasing} if the height is positive, and \emph{decreasing} otherwise.

Let $\E$ be a sheaf on $X$, let $T$ be a rational variety and let $p:X\times_k T \to X$ be the natural projection. 
If for some $\a$ there is a map $\Phi_T: p^*\O(\a) \to p^*\E$ that is fiber-wise injective over $T$, then we call $\coker \Phi_T$ a \emph{rigid family} of sheaves on $X$.
\end{definition}

If there are extensions $0 \to \O(\a) \to \E \to \F \to 0$ and $0\to \O(\a) \to \E \to \G \to 0$ for the same $\a$ and sheaf $\E$, then $\F$ and $\G$ belong in a rigid family by the proof of \Cref{Replace}.
The converse is true trivially.
In particular, if there is an elementary biliaison of height zero from $\F$ to $\G$, then $\F$ and $\G$ belong in a rigid family. 

\begin{example}
There is an elementary $(S_2^+)$-biliaison between two rank two reflexive sheaves $\E$ and $\E'$ in $\P^3_k$ iff they correspond to the same curve $C$ in $\P^3_k$ in the sense of \cite{Hartshorne80}.
\end{example}

We briefly explain how elementary $(S_m^+)$-biliaisons of sheaves generalize elementary biliaisons of codimension two subvarieties. 
Let $\I_Y$ and $\I_Z$ be ideal sheaves of pure codimension two subschemes $Y$ and $Z$ of $X$. 
If there is an elementary $(S_1^+)$-biliaison between $\I_Y$ and $\I_Z(\delta)$, then we have exact sequences 
\[
0 \to \O(-a) \to \I_Y \to \I_{Y/K} \to 0\]
\[
0 \to \O(-a+\delta) \to \I_Z(\delta) \to \I_{Z/K}(\delta) \to 0,
\]
where $K$ is a hypersurface in the linear system $|\O(a)|$ containing $Y,Z$ such that $\I_{Y/K}\cong \I_{Z/K}(\delta)$ \cite[Prop 3.5]{Hartshorne03}.
Conversely, if there is a hypersurface $K$ in $|\O(a)|$ containing $Y,Z$ such that $\I_{Y/K}\cong \I_{Z/K}(\delta)$, the maps $\O(-a) \to \I_Y$ and $\O(-a+\delta) \to \I_Z(\delta)$ form an elementary $(S_1^+)$-biliaison of height $\delta$ from $\I_Y$ to $\I_Z(\delta)$. 

\bigskip

The main result of this section is that $(S_m^+)$ sheaves in a biliaison class can be obtained from one another using finitely many elementary biliaisons, rigid deformations and at most one $m$-reduction.

\begin{maintheorem}[Weak structure theorem]\label{Weak} For $m\ge 1$, if $\F$ and $\G$ are $(S_m^+)$ sheaves in the same biliaison class, then there are $(S_m^+)$ sheaves $\F = \F_0,\dots, \F_l = \G$, such that $\F_i$ and $\F_{i+1}$ are related in one of the following ways:
\begin{enumerate}
\item[(a)] there is an elementary $(S_{m-1}^+)$-biliaison from $\F_i$ to $\F_{i+1}$, 
\item[(b)] $\F_i$ and $\F_{i+1}$ belong in a rigid family, 
\item[(c)] there is an $m$-reduction $\phi:\O(\a) \to \F_{i+1}$ with $\coker \phi\cong \F_i$ or vice versa.
\end{enumerate}
We need (c) at most once. 
If $\rank \F =\rank \G$, then we do not need (c). 
\end{maintheorem}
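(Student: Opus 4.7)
The plan is to reduce the theorem to the case of a single common ancestor ($\F\sim\G$), and then iterate \Cref{Join} to interpolate $\F$ and $\G$ via elementary $(S_{m-1}^+)$-biliaisons and rigid families. Since biliaison is generated by $\sim$, an induction on chain length reduces to this base case; the subtlety that intermediate sheaves of a biliaison chain may fail to be $(S_m^+)$ is absorbed by the common-ancestor enlargement in the argument below. For $\F\sim\G$ with common ancestor $\E$ and defining extensions
\[
0 \to \O(\a) \to \E\oplus\O(\a') \to \F \to 0,\qquad 0 \to \O(\b) \to \E\oplus\O(\b') \to \G \to 0,
\]
I would pad both with identity summands and enlarge $\E$ so that, with $|\b''|:=\rank\G-\rank\F\ge 0$ (swap $\F$ and $\G$ if needed), both $\F\oplus\O(\b'') = \coker(\O(\a^*)\to\tilde\E)$ and $\G = \coker(\O(\b^*)\to\tilde\E)$ with $|\a^*|=|\b^*|$. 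A single move (c) connects $\F$ with $\F\oplus\O(\b'')$; this is unnecessary when $\rank\F=\rank\G$ (in which case $|\b''|=0$), giving the refined at-most-once clause.

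With $\F\oplus\O(\b'')$ and $\G$ realized as cokernels of $m$-reductions $\phi,\psi$ of the same $\tilde\E$ with matching shape lengths and ranks, I would iterate \Cref{Join} to modify one entry of $\psi$'s shape at a time towards $\phi$'s. At each step \Cref{Replace} puts two consecutive $m$-reductions $\xi_j,\xi_{j+1}$ into a form where they share all sections except at a single index. Let $\rho$ be the common sub-reduction obtained by deleting the differing section, and set $\K:=\coker\rho$; the depth analysis in \Cref{Join} shows $\K$ is $(S_{m-1}^+)$. The cokernels $\F_j:=\coker\xi_j$ and $\F_{j+1}:=\coker\xi_{j+1}$ are rank-one quotients of $\K$,
\[
0 \to \O(-c)\to\K\to\F_j\to 0,\qquad 0 \to \O(-c')\to\K\to\F_{j+1}\to 0.
\]
To extract the elementary biliaison, form the pushout $\L := \K/\bigl(\im\O(-c)+\im\O(-c')\bigr)$. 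A general-position argument in the spirit of \Cref{Replace} ensures the two rank-one subsheaves of $\K$ are generically linearly independent, so their sum is direct, producing
\[
0\to \O(-c')\to\F_j\to\L\to 0,\qquad 0\to \O(-c)\to\F_{j+1}\to\L\to 0,
\]
which is the desired elementary $(S_{m-1}^+)$-biliaison (move (a)). Once the shapes are aligned through these iterations, two $m$-reductions of $\tilde\E$ of the same shape yield cokernels in a rigid family by the remark following \Cref{Replace} (move (b)), completing the chain.

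The main technical hurdle is the generic-position argument at each \Cref{Join} iteration: one must verify that the two rank-one subsheaves of $\K$ are generically linearly independent, and that the resulting pushout $\L$ satisfies $(S_{m-1}^+)$. Both are analogs of the arguments in \Cref{Replace} and the basicness bookkeeping of \Cref{Join}, now applied to $\K$ in place of $\E$ and tracking basicness at codimension at most $m-1$. A secondary bookkeeping point is confining the use of move (c) to a single step: this is the purpose of the padding and enlargement in the first paragraph, which matches shape lengths between $\F\oplus\O(\b'')$ and $\G$ and yields $|\b''|=0$ exactly when the ranks agree.
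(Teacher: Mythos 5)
Your proposal is correct and follows essentially the same route as the paper: reduce to two $m$-reductions of a single padded ancestor, induct on the number of differing shape entries using \Cref{Replace} and the Step 3--4 machinery of \Cref{Join}, and realize each single-entry change as an elementary $(S_{m-1}^+)$-biliaison through the common quotient $\L=\E/(t_1,\dots,t_v,s_{\epsilon+1})$, with agreement of shapes giving a rigid family and a single summand-inclusion accounting for the one use of (c). The "technical hurdle" you flag is exactly what Step 3 of \Cref{Join} already proves (basicness of $t_1,\dots,t_v,s_{\epsilon+1}$ at all points of codimension $\le m-1$, whence $\L$ satisfies $(S_{m-1}^+)$ by the depth lemma), so nothing further is needed.
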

\begin{proof}
Since biliaison equivalence is generated by the relation $\sim$, it is enough to prove the assertion when $\F$ and $\G$ have the same ancestor.
By definition, there are extensions 
\[0\to \O(\a) \xrightarrow{\phi} \E\oplus \O(\b) \to \F \to 0\]
\[
0 \to \O(\a') \xrightarrow{\psi} \E \oplus \O(\b') \to \G \to 0.
\]
We may consider two $m$-reductions of the same sheaf $\E' := \E\oplus \O(\b) \oplus \O(\b')$ given by 
\[
\phi \oplus \op{Id}: \O(\a)\oplus\O(\b') \to \E'\]
\[\psi \oplus \op{Id}: \O(\a')\oplus \O(\b) \to \E'.
\] 
In doing so, we reduce to the following: the cokernels of two $m$-reductions $\phi:\O(\a) \to \E$ and $\psi:\O(\b) \to \E$ of the same sheaf $\E$ are related by finitely many steps in the manner $(a)-(c)$.

Let $\a = (a_i)_{i = 1}^u$ and $\b = (b_i)_{i = 1}^v$, and let $s_1,\dots, s_u$ and $t_1,\dots, t_v$ be twisted sections of $\E$ corresponding to $\phi$ and $\psi$ respectively. 
We proceed by induction on $D(\phi,\psi)$, the number of indices where $(a_i)_{i =1}^u$ and $(b_i)_{i = 1}^v$ differ, including those $i$ where only one of $a_i,b_i$ is defined.
If $D(\phi,\psi) = 0$, then $\coker \phi$ and $\coker \psi$ are related in manner (b) by the proof of \Cref{Replace}. 

Suppose $D(\phi,\psi)>0$. 
Let $\epsilon$ be the largest integer in $[0,\min(u,v)]$ where $a_i= b_i$ for all $1\le i\le \epsilon$.
By \Cref{Replace} again, we may replace $\phi$ and $\psi$ in manner (b) and assume that $s_i = t_i$ for $1\le i\le \epsilon$.
If $\epsilon = \min(u,v)$, then $(a_i)_{i = 1}^u$ is a subsequence of $(b_i)_{i = 1}^v$ (or vice versa), and we see that $\coker \phi$ and $\coker \psi$ are related in manner (c). 

We now discuss the case where $\epsilon<\min(u,v)$.
Interchanging $\phi$ and $\psi$ if necessary, we may assume $a_{\epsilon+1}<b_{\epsilon+1}$. 
By the proof of \Cref{Meet}, there is an $m$-reduction $\xi:\O(\underline{c}) \to \E$ corresponding to twisted sections $s_1,\dots, s_{\epsilon+1}, t_{\epsilon+2}',\dots, t_v'$, where $t_i' = t_i + r_i\cdot t_{\epsilon+1}$ for some $r_i\in H^0(\O(-b_i-b_{\epsilon+1}))$.
Since $D(\phi,\xi)$ is smaller than $D(\phi,\psi)$, by the induction hypothesis the sheaves $\coker \phi$ and $\coker \xi$ are related by finitely many steps of manner $(a) - (c)$.
 
To finish the proof, we show that $\coker \psi$ and $\coker \xi$ are related in manner (a). 
In step (3) of the proof of \Cref{Meet}, we showed that $t_1,\dots, t_v, s_{\epsilon+1}$ are basic in $\E$ at all points of codimension $\le m-1$. 
There are $(S_{m-1}^+)$-Serre correspondences 
\[
u: \O(-a_{\epsilon+1}) \xrightarrow{s_{\epsilon+1}} \coker \psi, \quad v:\O(-b_{\epsilon+1}) \xrightarrow{t_{\epsilon+1}} \coker \xi
\] 
such that
\[
\coker u = \frac{\E}{(t_1,\dots, t_v, s_{\epsilon+1})} \cong \frac{\E}{(s_1,\dots, s_{\epsilon+1}, t_{\epsilon+1}, t_{\epsilon+2}',\dots, t_v')} = \coker v. \qedhere
\]
\end{proof}

The converse of \Cref{Weak} is obviously true, i.e. if two $(S_m^+)$ sheaves are related in the manner above, then they belong to the same biliaison class.

\bigskip

There is a dual notion of elementary biliaisons, see for example \cite[Definition 4.7]{Buraggina99}.
A word of caution that this dual notion does \textbf{not} generalize elementary biliaisons of subvarieties. 
We say there is a \emph{dual elementary $(S_m^+)$-biliaison} from $\F$ to $\G$ if there is a pair of $(S_m^+)$-Serre correspondences $\phi: \O(-a) \to \E$ and $\psi: \O(-b) \to \E$ for some sheaf $\E$ and integers $a,b$ such that $\coker \phi \cong \F$ and $\coker \psi \cong \G$.
We remark that \Cref{Weak} holds trivially with dual elementary $(S_m^+)$-biliaisons instead, due to the fact that an $m$-reduction remains an $m$-reduction when we restrict to a summand.
Our results in \Cref{LazarsfeldRao} give stronger statements than those in \cite{Buraggina99} even for the special case of rank two reflexive sheaves in $X=\P^3_k$.

\bigskip

If we restrict to the speical case of rank one $(S_1^+)$ sheaves on $\P^n_k$, then \Cref{Weak} recovers a weak version of the structure theorem for even linkage classes of codimension two subvarieties (cf. \cite{BBM91,Nollet96}). 
In the next section, we will prove a stronger structure theorem under an additional assumption on $X$ which we now define.

\begin{definition} 
We define the following notions relative to the very ample line bundle $\O(1)$. 
\begin{enumerate}[leftmargin = *]
\item We say a sheaf $\F$ is \emph{primitive} if $\Ext^1(\F,\O(l)) = 0$ for all $l\in\mathbb{Z}$.
\item We say $X$ is \emph{primitive} if $\O_X$ is primitive.
\item Two sheaves $\F$ and $\F'$ are \emph{stably equivalent} if $\F\oplus \O(\a) \cong \F' \oplus \O(\b)$ for some $\a,\b$. 
\end{enumerate}
\end{definition}

If $H^1(\O_X) = 0$, then $X$ is primitive relative to any large enough multiple of an ample line bundle by Serre vanishing and Serre duality.
If $X$ is subcanonical, i.e. $\omega_X \cong \O(l)$ for some integer $l$, then a sheaf $\F$ is primitive iff $H^{n-1}_*(\F) = 0$ by Serre duality, where $n = \dim X$.

\bigskip

Note that $X$ is primitive iff $H^1_*(\O_X)=0$. 
Under this assumption, biliaison equivalence is also called \emph{psi-equivalence} in \cite{Hartshorne03}, and is closely related to stable equivalence.
We recall some useful facts from the same paper. 

\begin{proposition}\label{Primitive}
Suppose $X$ is primitive.
\begin{enumerate}[leftmargin = *]
\item 
If $\F$ is a descendant of $\E$ and $\G$ is a descendant of $\F$, then $\G$ is a descendant of $\E$.
\item If $\F$ and $\G$ have a common descendant, then $\F$ and $\G$ have a common ancestor.
\item The relation $\sim$ is an equivalence relation, thus coincides with biliaison equivalence.
\item If $\E$ is primitive and shares a common descendant with $\F$, then $\F$ is a descendant of $\E$. 
Thus all sheaves in the biliaison class of a primitive sheaf $\E$ are descendants of $\E$.
\item Two primitive sheaves in the same biliaison class are stably equivalent.
\item If $\E$ is primitive and $\F$ is a sheaf in the biliaison class of $\E$ that satisfy $(S_m^+)$, then $\E$ also satisfies $(S_m^+)$.
\item If $X$ is Gorenstein in codimension one $(G_1)$ and $\F$ satisfies $(S_1^+)$, then $\F$ is the descendant of a primitive sheaf. 
In particular, biliaison classes that contain an $(S_1^+)$ sheaf are in bijection with the stable equivalence classes of primitive $(S_1^+)$ sheaves. 
\end{enumerate}
\end{proposition}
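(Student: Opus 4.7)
The strategy is to exploit the hypothesis $X$ primitive, i.e.~$H^1_*(\O_X) = 0$, so that $\Ext^1(\O(\a),\O(\b))$ vanishes for all sequences $\a,\b$. Consequently any extension $0 \to K \to M \to L \to 0$ whose outer terms $K,L$ are direct sums of line bundles automatically splits. Parts (1)--(6) all reduce to applications of this splitting principle, while (7) requires a separate construction of a primitive ancestor.

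For (1), I direct-sum the first extension with $\O(\d)$ to obtain a surjection $\E \oplus \O(\b) \oplus \O(\d) \twoheadrightarrow \F \oplus \O(\d)$, and compose with the second surjection to $\G$; the kernel of the composition fits in $0 \to \O(\a) \to K \to \O(\c) \to 0$, which splits by the principle above, exhibiting $\G$ as a descendant of $\E$. For (2), I form the fibre product $P = (\F \oplus \O(\b)) \times_\H (\G \oplus \O(\d))$; diagram chasing yields $0 \to \O(\c) \to P \to \F \oplus \O(\b) \to 0$, and composing with the projection $\F \oplus \O(\b) \twoheadrightarrow \F$ followed by splitting the resulting kernel extension $0 \to \O(\c) \to K' \to \O(\b) \to 0$ presents $\F$ as a descendant of $P$; the argument for $\G$ is symmetric. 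Part (3) is then formal: reflexivity and symmetry are built into the definition, and transitivity follows because given $\F \sim \F' \sim \F''$ with common ancestors $A$ and $B$ respectively, $\F'$ is a common descendant of $A$ and $B$, so (2) yields a common ancestor $C$ of $A,B$, and (1) twice makes $\F$ and $\F''$ descendants of $C$.

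For (4), the same fibre product now gives $0 \to \O(\c) \to P \to \E \oplus \O(\b) \to 0$; this splits because $\E$ primitive forces $\Ext^1(\E \oplus \O(\b), \O(\c)) = 0$, so $P \cong \E \oplus \O(\b) \oplus \O(\c)$, and projecting to $\F$ as in (2) yields $\F$ as a descendant of $\E$. The addendum uses (3): for any $\G$ in the biliaison class pick a common ancestor $\E'$; then $0 \to \O(\a) \to \E' \oplus \O(\b) \to \E \to 0$ splits by primitivity of $\E$, giving $\E' \oplus \O(\b) \cong \E \oplus \O(\a)$, and substituting into the defining extension $0 \to \O(\c) \to \E' \oplus \O(\d) \to \G \to 0$ (after direct-summing with $\O(\b)$ and one final splitting of a line-bundle extension) exhibits $\G$ as a descendant of $\E$. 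Part (5) is immediate from (4): $\F$ is a descendant of $\E$, and primitivity of $\F$ splits the defining extension. For (6), the depth lemma applied to $0 \to \O(\a) \to \E \oplus \O(\b) \to \F \to 0$ gives $(S_m)$ for the middle term; at a point $x$ of codimension $\leq m$ the stalks $\O(\a)_x$ and $\F_x$ are free, so the short exact sequence of free modules over the local ring $\O_x$ splits and $(\E \oplus \O(\b))_x$ is free.

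The main obstacle is (7). The plan is to construct a primitive ancestor by packaging together generators of the graded module $N := \bigoplus_l \Ext^1(\F, \O(l))$ over $R := \bigoplus_l H^0(\O(l))$. Finite generation of $N$ should follow from the local-to-global $\Ext$ spectral sequence together with Serre vanishing on the upper tail, and from Serre--Grothendieck duality on the lower tail, where the $G_1$ hypothesis ensures $\omega_X$ is sufficiently well-behaved in codimension one for the duality comparison to work. Choosing generators $\xi_i \in \Ext^1(\F, \O(-l_i))$ for $i = 1, \ldots, k$, the combined class in $\Ext^1(\F, \bigoplus_i \O(-l_i))$ yields an extension $0 \to \bigoplus_i \O(-l_i) \to \E \to \F \to 0$. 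Applying $\Ext^*_*(-, \O_X)$ produces a long exact sequence in which $\bigoplus_i H^0_*(\O(l_i)) \to N$ is surjective by construction while $\Ext^1_*(\bigoplus_i \O(-l_i), \O_X) = 0$ by primitivity of $X$, forcing $\Ext^1_*(\E, \O_X) = 0$, i.e.~$\E$ is primitive. By (6), $\E$ also satisfies $(S_1^+)$. The bijection statement then follows: surjectivity from this construction, injectivity from (5).
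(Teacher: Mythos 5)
Parts (1)--(6) of your argument are correct. The paper simply cites \cite[Lemmas 2.4, 2.5]{Hartshorne03} for (1) and (2) and then runs the same splitting-and-lifting mechanics you use; your fibre products in (2) and (4) are an equivalent substitute for the paper's lift-plus-horseshoe-lemma construction, and supplying direct proofs of the cited lemmas is reasonable. The problem is part (7), whose entire content is the finite generation of $\Ext^1_*(\F,\O_X)$ over $H^0_*(\O_X)$; once that is granted, your Horrocks-type step (choose generators, form the corresponding extension, read surjectivity of the connecting map off the long exact sequence using primitivity of $X$) is exactly the paper's appeal to \cite[Prop 2.1]{Hartshorne03}.

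Your justification of that finite generation does not go through. The upper tail is fine, but for the vanishing $\Ext^1(\F,\O(l))=0$ for $l\ll 0$ you invoke Serre--Grothendieck duality, and the comparison $\Ext^1(\F,\O(l))\cong H^{n-1}(\F\otimes\omega_X(-l))^*$ requires writing $\O(l)$ as $\omega_X$ tensored with an invertible sheaf, i.e.\ it requires $\omega_X$ to be invertible \emph{globally}. The hypothesis is only $(G_1)$: invertibility of $\omega_X$ in codimension one says nothing about the global duality statement, and ``sufficiently well-behaved in codimension one'' is not an argument. A telling symptom is that your proof of (7) never uses the hypothesis that $\F$ satisfies $(S_1^+)$, which is precisely where the paper's argument draws its strength (for a sheaf with torsion the module $\Ext^1_*(\F,\O_X)$ can fail to be finitely generated). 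The paper instead takes a surjection $\O(\a)\to\F$ with kernel $\K$; the long exact sequence of $\Ext_*(-,\O_X)$ and primitivity of $X$ give a surjection $\Hom_*(\K,\O_X)\twoheadrightarrow\Ext^1_*(\F,\O_X)$; the depth lemma applied with $(S_1^+)$ of $\F$ shows $\K$ satisfies $(S_2)$, hence, $X$ being $(S_2)$ and $(G_1)$, the sheaf $\K$ is reflexive, so $\Hom_*(\K,\O_X)=H^0_*(\sHom(\K,\O_X))$ is a finitely generated module (it is the module of sections of a torsion-free coherent sheaf, so it vanishes in low degrees), and therefore so is its quotient $\Ext^1_*(\F,\O_X)$. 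You should replace your duality step with this syzygy argument; as written, (7) is incomplete.
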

\begin{proof}
\begin{enumerate}[leftmargin = *]
\item \cite[Lemma 2.4]{Hartshorne03}.
\item \cite[Lemma 2.5]{Hartshorne03}.
\item The relation $\sim$ is evidently reflexive and symmetric.
(1) and (2) show that $\sim$ is transitive. 
\item Given $0 \to \O(\a) \to \E \oplus \O(\b) \to \G \to 0$ and $0 \to \O(\a') \to \F\oplus \O(\b') \to \G \to 0$, the map $\E\oplus \O(\b) \to \G$ lifts to a map $\E\oplus \O(\b) \to \F\oplus \O(\b')$ since $\Ext^1(\E\oplus \O(\b),\O(\a')) = 0$. 
By the horseshoe lemma we have an extension 
\[
0 \to \O(\a) \to \E\oplus \O(\b)\oplus \O(\a') \to \F\oplus \O(\b')\to 0.
\]
It follows that $\F\oplus \O(\b')$ is a descendant of $\E$. 
Since $\F$ is a descendant of $\F\oplus \O(\b')$, we conclude from (1) that $\F$ is a descendant of $\E$.
\item  By (4) we have an exact sequence $0 \to \O(\a) \to \E\oplus \O(\b) \to \E' \to 0$.  
This sequence is split since $\Ext^1(\E',\O(\a)) = 0$.
We conclude that $\E\oplus \O(\b) \cong \E'\oplus \O(\a)$.
\item By (4), the sheaf $\F$ is a descendant of $\E$.
Thus $\E$ satisfies $(S_m^+)$ by the remark below \Cref{Descendant}.
\item Consider a surjection $\O(\a) \to \F$ with kernel $\K$. 
There is a surjection $\Hom_*(\K,\O_X) \to \Ext^1_*(\F,\O_X)$. 
Since $X$ satisfies $(G_1)$ and $(S_2)$, and $\K$ satisfies $(S_2)$, we see that $\K$ is reflexive.
We conclude that $\Hom_*(\K,\O_X)$ and $\Ext^1_*(\F,\O_X)$ are finitely generated modules over $H^0_*(\O_X)$. 
We may then find an extension 
\[
0 \to \O(\a) \to \E \to \F \to 0,
\]
where the map $\alpha$ in the long exact sequence
\[
\cdots \to \Hom_*(\O(\a),\O_X) \xrightarrow{\alpha} \Ext^1_*(\F,\O_X) \to \Ext^1_*(\E,\O_X) \to 0
\]
corresponds to generators of the module $\Ext^1_*(\F,\O_X)$  \cite[Prop 2.1]{Hartshorne03}. \qedhere
\end{enumerate}
\end{proof}
From the lower terms of the spectral sequence of $\Ext$ 
\[
0\to H^1_*(\E^*) \to \Ext^1_*(\E,\O_X) \to H^0_*(\sExt^1(\E,\O_X)) \to H^2_*(\E^*) \to \dots
\]
we see that \emph{extraverti} sheaves in the sense of \cite{Hartshorne03} are primitive.
The converse need not be true as extraverti sheaves are exactly primitive sheaves whose classes contain the ideal sheaf of a pure codimension two subvariety up to twist.
In our article, we are not concerned with the properties of the varieties defined by the ideal sheaves in a biliaison class, thus we resort to the more general definition of primitive sheaves.

\section{Minimal sheaves}

In this section, we assume that $X$ is primitive and Gorenstein in codimension one $(G_1)$.
We define a natural preorder among $(S_m^+)$ sheaves in the same biliaison class and prove that there is always a minimal member, generalizing the fact that there is always a minimal subvariety in an even linkage class. 
We then prove a stronger structure theorem for $(S_m^+)$ sheaves in a biliaison classe, which is an analogue of the Lazarsfeld-Rao property for even linkage classes.
Finally, we deduce a sufficient criterion for an $(S_m^+)$ sheaf to be minimal.

\begin{definition}
We say a sheaf $\F$ is \emph{very primitive} if $\F$ is primitive and does not admit a non-trivial direct summand of the form $\O(\a)$.
\end{definition}

Recall that coherent sheaves on $X$ form a Krull-Schimdt category \cite{Atiyah56}. 
In particular, any primitive sheaf $\F$ is of the form $\F'\oplus \O(\a)$ for some very primitive sheaf $\F'$ and finite integer sequence $\a$, and this decomposition is unique up to isomorphism.
It follows from \Cref{Primitive} that a very primitive sheaf is unique up to isomorphism in its biliaison class.

\bigskip

We define the following invariant for sheaves in the biliaison class of a primitive sheaf.

\begin{definition}\label{Sigma}
Let $\E$ be a very primitive sheaf, and let $\F$ be in the biliaison class of $\E$.
It follows from \Cref{Primitive} that $\F\in \D(\E)$, i.e. there is an extension of the form
\[
0 \to \O(\a) \to \E \oplus \O(\b) \to \F \to 0.
\]
We define the \emph{$\Sigma$ function} of $\F$ to be 
$
\Sigma(\F,l) := \Sigma(\b,l)-\Sigma(\a,l).
$
\end{definition}

\begin{proposition}\label{Defined}
The function $\Sigma(\F,-)$ is well-defined for any sheaf $\F$ in the biliaison class of a primitive sheaf. 
In particular, the $\Sigma$ function is well defined for any sheaf $\F$ in the biliaison class of an $(S_1^+)$ sheaf.
\end{proposition}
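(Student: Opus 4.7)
The plan is to verify that the definition $\Sigma(\F,l) := \Sigma(\b,l) - \Sigma(\a,l)$ does not depend on the particular extension $0 \to \O(\a) \to \E \oplus \O(\b) \to \F \to 0$ chosen. I would first note that the very primitive sheaf $\E$ in a given biliaison class is unique up to isomorphism: by \Cref{Primitive}(5), any two primitive sheaves in the class are stably equivalent, so by Krull--Schmidt \cite{Atiyah56} the very primitive factors are isomorphic. That such an extension exists (so the definition makes sense at all) is exactly \Cref{Primitive}(4).

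Now suppose we have two extensions
\[
0 \to \O(\a_i) \to \E \oplus \O(\b_i) \to \F \to 0, \qquad i = 1,2.
\]
Since $X$ is primitive ($H^1_*(\O_X)=0$) and $\E$ is primitive, so is $\E \oplus \O(\b_1)$, hence $\Ext^1(\E \oplus \O(\b_1), \O(\a_2)) = 0$ and the surjection onto $\F$ lifts to a map $\E \oplus \O(\b_1) \to \E \oplus \O(\b_2)$. Exactly as in the proof of \Cref{Primitive}(4), the horseshoe lemma produces an exact sequence
\[
0 \to \O(\a_1) \to \E \oplus \O(\b_1) \oplus \O(\a_2) \to \E \oplus \O(\b_2) \to 0.
\]
Primitivity of $\E \oplus \O(\b_2)$ gives $\Ext^1(\E \oplus \O(\b_2), \O(\a_1)) = 0$, so this sequence splits, yielding an isomorphism
\[
\E \oplus \O(\b_1) \oplus \O(\a_2) \;\cong\; \E \oplus \O(\b_2) \oplus \O(\a_1).
\]

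To finish I would apply Krull--Schmidt. Each indecomposable summand of $\E$ fails to be a line bundle since $\E$ is very primitive, so those summands match one for one across the isomorphism, leaving $\O(\b_1) \oplus \O(\a_2) \cong \O(\b_2) \oplus \O(\a_1)$. Since distinct twists of $\O$ are non-isomorphic indecomposables, this forces the multisets $\b_1 \cup \a_2$ and $\b_2 \cup \a_1$ to coincide. Counting entries $\le l$ yields $\Sigma(\b_1,l) + \Sigma(\a_2,l) = \Sigma(\b_2,l) + \Sigma(\a_1,l)$, which is precisely $\Sigma(\b_1,l) - \Sigma(\a_1,l) = \Sigma(\b_2,l) - \Sigma(\a_2,l)$. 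For the second assertion, if $\F$ lies in the biliaison class of an $(S_1^+)$ sheaf, then by \Cref{Primitive}(7) that class already contains a primitive sheaf, so the first part applies. The only delicate point is the separation of $\E$ from the line bundle summands inside Krull--Schmidt, but this is exactly what very primitivity is designed to guarantee.
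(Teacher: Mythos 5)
Your proof is correct and follows essentially the same route as the paper: lift one presentation through the other using primitivity, apply the horseshoe/Schanuel argument, split via $\Ext^1(-,\O(\a))=0$, and cancel with Krull--Schmidt. The only cosmetic difference is that you first fix the very primitive sheaf $\E$ by invoking its uniqueness in the class, whereas the paper allows two a priori different very primitive sheaves $\E,\E'$ and lets Krull--Schmidt identify them in the same step.
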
 
\begin{proof}
We need to show that the function $\Sigma(\F,-)$ does not depend on the extension 
\[
0 \to \O(\a) \to \E \oplus \O(\b) \to \F \to 0.
\]
Suppose $0 \to \O(\a') \to \E' \oplus \O(\b') \to \F \to 0$ is another extension where $\E'$ is very primitive.  
The surjection $\E' \oplus \O(\b') \to \F$ lifts to a map $\E' \oplus \O(\b') \to \E \oplus \O(\b)$ since $\E$ is primitive and $X$ is primitive. 
We have a surjection $\E' \oplus \O(\b') \oplus \O(\a) \to \E \oplus \O(\b)$ with kernel $\O(\a')$ by the horseshoe lemma. 
Since $\Ext^1(\E\oplus \O(\b),\O(\a')) = 0$, the above surjection splits and we obtain an isormorphism
\[\E' \oplus \O(\b') \oplus \O(\a)  \cong  \E \oplus \O(\b)\oplus \O(\a').\]
Since $\E$ and $\E'$ are both very primitive, we have that $\O(\b')\oplus \O(\a) \cong \O(\b)\oplus \O(\a')$ by the uniqueness of the Krull-Schimdt decomposition.
It follows that $\Sigma(\F,l) = \Sigma(\b,l)-\Sigma(\a,l) = \Sigma(\b',l)-\Sigma(\a',l)$. 
The last statement follows from \Cref{Primitive}.
\end{proof}

Given the Hilbert function of the primitive sheaf $\E$, the data of the $\Sigma$ function of a sheaf $\F$ in the biliaison class of $\E$ is equivalent to the data of the Hilbert function of $\F$. 

For every sheaf $\F$ there is a natural surjection $\bigoplus_{l\in\mathbb{Z}} \O(-l)^{f(l)} \to \F$ given by sections of $\F$ in all degrees, where $f(l) = h^0(\F(l))$.
For any integer $a\in \mathbb{Z}$, we define $\F_{\le a}$ to be the image subsheaf of the restriction $\bigoplus_{l \le a} \O(-l)^{f(l)} \to \F$.
We say $\F_{\le a}$ is the subsheaf of $\F$ generated by sections of degree $\le a$.
Our notations are chosen to be consistent in the sense that $\Sigma(\O(\a),l) = \Sigma(\a,l) = \rank \O(\a)_{\le l}$.

\bigskip

The next proposition says we can compute the $\Sigma$ function of $\F$ from the $\Sigma$ function of any ancestor of $\F$.

\begin{proposition}\label{SigmaComp}
Suppose $\F,\G \in \D(\E)$ for some very primitive sheaf $\E$. 
If there is an extension $0 \to \O(\a) \to \G \oplus \O(\b) \to \F \to 0$, then 
\[
\Sigma(\F\oplus \O(\a),l) = \Sigma(\G\oplus \O(\b),l),\quad \forall l\in\mathbb{Z}.
\]
\end{proposition}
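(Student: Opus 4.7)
The plan is to splice an ancestry of $\G$ with the given extension to produce an explicit ancestry of $\F$ over $\E$, from which both $\Sigma$-values can be read off directly. Since $\G \in \D(\E)$, I would fix an extension
\[
0 \to \O(\underline{c}) \to \E \oplus \O(\underline{d}) \to \G \to 0,
\]
so that $\Sigma(\G,l) = \Sigma(\underline{d},l) - \Sigma(\underline{c},l)$ by \Cref{Sigma}. Adding the identity on $\O(\b)$ and composing with the given surjection $\G \oplus \O(\b) \twoheadrightarrow \F$ produces a surjection $\E \oplus \O(\underline{d}) \oplus \O(\b) \twoheadrightarrow \F$ whose kernel $\K$ fits in
\[
0 \to \O(\underline{c}) \to \K \to \O(\a) \to 0.
\]

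The key observation is that this sequence splits: its class lies in $\Ext^1(\O(\a),\O(\underline{c})) = \bigoplus_{i,j} H^1(\O(c_j - a_i))$, which vanishes because $X$ is primitive, i.e.\ $H^1_*(\O_X) = 0$. Hence $\K \cong \O(\underline{c}) \oplus \O(\a)$, and I obtain the ancestry
\[
0 \to \O(\underline{c}) \oplus \O(\a) \to \E \oplus \O(\underline{d}) \oplus \O(\b) \to \F \to 0
\]
of $\F$ over $\E$. Applying \Cref{Sigma} to this ancestry yields $\Sigma(\F,l) = \Sigma(\underline{d},l) + \Sigma(\b,l) - \Sigma(\underline{c},l) - \Sigma(\a,l) = \Sigma(\G,l) + \Sigma(\b,l) - \Sigma(\a,l)$.

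To finish, note that appending $\O(\underline{x})$ to the middle term of any ancestry of $\H$ produces an ancestry of $\H \oplus \O(\underline{x})$, so the identity $\Sigma(\H \oplus \O(\underline{x}),l) = \Sigma(\H,l) + \Sigma(\underline{x},l)$ holds for every $\H$ in the biliaison class of $\E$. Applied to $\F$ and $\G$, this rearranges the computation above into the desired equality $\Sigma(\F \oplus \O(\a),l) = \Sigma(\G \oplus \O(\b),l)$. There is no real obstacle in the argument: the only non-routine step is the splitting of $\K$, which is immediate from the standing hypothesis that $X$ is primitive, and the rest is bookkeeping around \Cref{Sigma}.
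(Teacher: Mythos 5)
Your argument is correct, and it produces the same kind of single exact-sequence comparison that the paper's proof rests on, but you splice in the opposite direction. The paper starts from an ancestry $0 \to \O(\a') \to \E\oplus\O(\b') \to \F \to 0$ of $\F$ and lifts the surjection $\E\oplus\O(\b')\to\F$ through the given $\G\oplus\O(\b)\to\F$ (using $\Ext^1(\E\oplus\O(\b'),\O(\a))=0$, i.e.\ primitivity of both $\E$ and $X$), obtaining via the horseshoe lemma an ancestry $0\to\O(\a')\to\E\oplus\O(\b')\oplus\O(\a)\to\G\oplus\O(\b)\to0$ from which both $\Sigma$ values are read off. You instead start from an ancestry of $\G$, compose surjections to manufacture an ancestry of $\F$, and the only vanishing you need is $\Ext^1(\O(\a),\O(\underline{c}))=0$, which follows from $H^1_*(\O_X)=0$ alone. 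The two routes are equally short; yours has the mild advantage that primitivity of $\E$ enters only through the well-definedness of $\Sigma$ (\Cref{Defined}) rather than through the homological step, while the paper's is the same lifting move it reuses in \Cref{Primitive} and \Cref{Defined}. Your closing reduction via $\Sigma(\H\oplus\O(\underline{x}),l)=\Sigma(\H,l)+\Sigma(\underline{x},l)$ is legitimate and consistent with the convention $\Sigma(\O(\a),l)=\Sigma(\a,l)$. One trivial slip: with the paper's convention $\O(\a)=\bigoplus_i\O(-a_i)$, the obstruction group is $\bigoplus_{i,j}H^1(\O(a_i-c_j))$ rather than $H^1(\O(c_j-a_i))$; either way it is a direct sum of twists of $\O_X$ and vanishes, so the splitting of $\K$ stands.
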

\begin{proof}
Given an extension $0 \to \O(\a') \to \E\oplus \O(\b') \to \F \to 0$, the map $\E\oplus \O(\b') \to \F$ lifts to a map $\E\oplus \O(\b') \to \G\oplus \O(\b)$ and we obtain an exact sequence 
\[
0 \to \O(\a') \to \E\oplus \O(\b')\oplus \O(\a) \to \G \oplus \O(\b) \to 0.
\] 
We conclude that $\Sigma(\G\oplus \O(\b),l) = \Sigma(\b',l)+\Sigma(\a,l)-\Sigma(\a',l) = \Sigma(\F\oplus \O(\a),l)$.
\end{proof}
\bigskip

Here are some simple but important observations on $\Sigma$ functions of $(S_1^+)$ sheaves.
\begin{proposition}\label{SigmaProp}
Let $m\ge 1$ and let $\F\in \D_m(\E)$ for a very primitive sheaf $\E$. 
Let $r$ be the minimal rank of all sheaves in $\D_m(\E)$, and define $e := \inf\{l \mid H^0(\E(l)) \ne 0\}$.
\begin{enumerate}[leftmargin=*]
\item If $\E\ne 0$, then $e$ is an integer.
\item $\Sigma(\F,l) = 0$ for $l \ll 0$ and $\Sigma(\F,l) \ge 0$ for all $l<e$.
\item $\Sigma(\F,l) \ge r-\rank \E$ for all $l\ge e$.
\item $\Sigma(\F,l) = \rank \F-\rank \E$ for all $l\gg 0$.  
\end{enumerate}
\end{proposition}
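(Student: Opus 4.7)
The plan is to address the four parts in order, with the main technical work in (3). Fix an extension $0 \to \O(\a) \to \E \oplus \O(\b) \to \F \to 0$ with inclusion $\phi$, and for a given integer $l$ decompose $\O(\a) = \O(\a_{\le l}) \oplus \O(\a_{>l})$ and $\O(\b) = \O(\b_{\le l}) \oplus \O(\b_{>l})$. For (1), since $\F$ satisfies $(S_m^+)$ with $m \ge 1$, so does $\E \oplus \O(\b)$, and hence its summand $\E$ is torsion-free. A torsion-free coherent sheaf on an integral projective variety embeds in some $\O(n)^{\oplus s}$, so $H^0(\E(l)) = 0$ for $l \ll 0$; Serre's theorem on global generation handles $l \gg 0$, making $e$ an integer. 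Part (4) is immediate: once $l$ dominates every entry of $\a$ and $\b$, $\Sigma(\a, l) = |\a|$ and $\Sigma(\b, l) = |\b|$, so $\Sigma(\F, l) = \rank \F - \rank \E$.

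Parts (2) and (3) share one key observation: because $\O(1)$ is very ample on integral $X$, $\Hom(\O(-a_i), \O(-b_j)) = 0$ whenever $a_i \le l < b_j$, so the restricted map $\phi^{(l)} := \phi|_{\O(\a_{\le l})}$ factors through the subsheaf $\E \oplus \O(\b_{\le l})$. For (2), the equality $\Sigma(\F, l) = \Sigma(\b, l) - \Sigma(\a, l) = 0$ for $l \ll 0$ is immediate since both terms vanish; for $l < e$ I additionally use $\Hom(\O(-a_i), \E) = H^0(\E(a_i)) = 0$ for $a_i < e$ to kill the $\E$-component of $\phi^{(l)}$ as well. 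What remains is an injection $\O(\a_{\le l}) \hookrightarrow \O(\b_{\le l})$, so $\Sigma(\a, l) \le \Sigma(\b, l)$ and hence $\Sigma(\F, l) \ge 0$.

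For (3), the plan is to show $\F' := \coker \phi^{(l)}$ lies in $\D_m(\E)$; since $\rank \F' = \rank \E + |\b_{\le l}| - |\a_{\le l}| = \rank \E + \Sigma(\F, l)$, the minimality of $r$ will then force $\Sigma(\F, l) \ge r - \rank \E$. Injectivity of $\phi^{(l)}$ is inherited from $\phi$, and the identity $\D_m(\E) = \D_m(\E \oplus \O(\b_{\le l}))$ places $\F'$ in $\D_m(\E)$ once the $(S_m^+)$ condition on $\F'$ is verified. For locally-freeness in codimension $\le m$, I use that $(S_m^+)$ on $\F$ forces $\phi$ to have full column rank at every such point, and this property passes to the submatrix $\phi^{(l)}$; hence $\coker \phi^{(l)}$ is locally free of the expected rank there. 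For the depth condition at points $x$ of codimension $> m$, I apply the depth lemma to $0 \to \O(\a_{\le l}) \to \E \oplus \O(\b_{\le l}) \to \F' \to 0$ using $\depth \E_x \ge \min(m, \dim \O_x)$; this yields $\depth \F'_x \ge m$ at these deeper points, while at shallower points the locally-freeness already established supplies depth equal to $\dim \O_x$. The depth bookkeeping across the codimension-$m$ threshold is the only subtle step; all the tools required are standard.
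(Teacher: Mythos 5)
Your proof is correct and follows essentially the same route as the paper: truncate $\O(\a)$ and $\O(\b)$ at degree $l$, use the vanishing of the relevant $\Hom$ groups (and of $H^0(\E(a_i))$ for $a_i<e$) to factor the restricted inclusion through $\O(\b)_{\le l}$, resp. $\E\oplus\O(\b)_{\le l}$, and invoke the minimality of $r$ among ranks in $\D_m(\E)$ for part (3). The only difference is cosmetic: where the paper verifies that the truncated map is still an $m$-reduction via the extension of its cokernel by $\O(\a)_{>l}$ and $\O(\b)_{>l}$, you do it by a direct fiber-rank plus depth-lemma computation, which is equally valid.
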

\begin{proof}
\begin{enumerate}[leftmargin=*]
\item Since $\E$ satisfies $(S_1^+)$, the map $\E \to \E^{**}$ is injective. 
We have $H^0(\E^{**}(l)) = 0$ for $l\ll 0$ by Serre duality and Serre vanishing. 
It follows that $H^0(\E(l)) = 0$ for $l\ll 0$ and $e$ is an integer. 
\item  Let $0 \to \O(\a) \to \E \oplus \O(\b) \to \F \to 0$ be an extension. 
It is clear that the restricted map $\phi:\O(\a)_{\le l} \to \E\oplus \O(\b)$ is an $m$-reduction. 
In fact, $\O(\a)_{\le l}$ maps into $\O(\b)_{\le l}$ since $\E \oplus \O(\b)_{>l}$ has no sections of degree $\le l$. 
It follows that $\psi:\O(\a)_{\le l} \to \O(\b)_{\le l}$ is an $m$-reduction, as its cokernel fits in an exact sequence 
\[
0\to \coker \psi \to \coker \phi \to \O(\b)_{>l} \to 0.
\]
We conclude that $\Sigma(\F,l) = \rank \O(\b)_{\le l} - \rank \O(\a)_{\le l} \ge 0$.
\item Suppose $l \ge e$. 
Similar to the above, there is an $m$-reduction $\psi:\O(\a)_{\le l} \to \E \oplus \O(\b)_{\le l}$. 
Since $\Sigma(\F,l) + \rank \E  = \rank \coker \psi \ge r$, we see that $\Sigma(\F,l)\ge r-\rank \E$.
\item This is true for any $l$ greater than the maximum of all entries of $\a$ and $\b$.  \qedhere
\end{enumerate}
\end{proof}

\bigskip

The invariant $\Sigma$ allows us to define a preorder on the biliaison class of a primitive sheaf.

\begin{definition}\label{Preorder}
If $\E$ is a very primitive sheaf and $\F,\G \in \D(\E)$, we write $\F\preceq \G$ if $\Sigma(\F,l) \le \Sigma(\G,l)$ for all $l\in\mathbb{Z}$. 
This defines a preorder on the biliaison class of $\E$.
\end{definition}

%
%
%
%
A preorder is a relation that is reflexive and transitive. 
Every preorder has an associated partial order, obtained by modding out equivalences where $\F\preceq\G$ and $\G\preceq \F$. 
The associated poset of a biliaison class with respect to the preorder $\preceq$ is exactly the poset of $\Sigma$ functions, where the partial order is given by point-wise comparison. 
The next proposition characterizes when two sheaves in a biliaison class have the same $\Sigma$ functions.

\begin{proposition}\label{Rigid}
The following are equivalent for two sheaves $\F$ and $\G$ whose biliaison classes admit primitive sheaves.
\begin{enumerate}[leftmargin = *]
\item $\F$ and $\G$ are in a rigid family,
\item $\F$ and $\G$ are in the same biliaison class, and $\F\preceq \G$ as well as $\G\preceq \F$. 
\end{enumerate}
\end{proposition}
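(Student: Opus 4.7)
The plan is to use the very primitive sheaf $\E_0$ in the common biliaison class (unique up to isomorphism by the remark preceding \Cref{Sigma}) as a reference, and translate between rigidity and equality of $\Sigma$ functions via \Cref{SigmaComp} and \Cref{Primitive}(4).

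For the easy direction $(1)\Rightarrow(2)$: if $\F,\G$ are fibers of a rigid family $\coker\Phi_T$ built from $\Phi_T\colon p^*\O(\a)\to p^*\E$, then they occur in extensions $0\to\O(\a)\to\E\to\F\to 0$ and $0\to\O(\a)\to\E\to\G\to 0$ with the same middle term and shape. This gives $\E$ as a common ancestor, so $\F\sim\G$ lies in the same biliaison class; applying \Cref{SigmaComp} to each extension (with empty $\b$) yields $\Sigma(\F,l)+\Sigma(\a,l)=\Sigma(\E,l)=\Sigma(\G,l)+\Sigma(\a,l)$, so $\Sigma(\F,-)=\Sigma(\G,-)$ and both preorder inequalities hold.

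For the harder direction $(2)\Rightarrow(1)$: by \Cref{Primitive}(4), both $\F$ and $\G$ are descendants of $\E_0$, so I fix extensions
\[
0\to\O(\a)\to\E_0\oplus\O(\b)\to\F\to 0,\qquad 0\to\O(\a')\to\E_0\oplus\O(\b')\to\G\to 0.
\]
Setting $\E:=\E_0\oplus\O(\b)\oplus\O(\b')$ and augmenting each extension by the identity on the missing $\O$-summand, I obtain injections $\O(\a)\oplus\O(\b')\hookrightarrow\E$ and $\O(\a')\oplus\O(\b)\hookrightarrow\E$ with cokernels $\F$ and $\G$ respectively. The equality $\Sigma(\F,l)=\Sigma(\G,l)$ reads $\Sigma(\b,l)-\Sigma(\a,l)=\Sigma(\b',l)-\Sigma(\a',l)$, i.e.\ $\Sigma(\a\cup\b',l)=\Sigma(\a'\cup\b,l)$ for all $l$; since a non-decreasing sequence is recovered from its $\Sigma$ function, these two shapes coincide as multisets, say $\underline{c}$.

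The remaining step, which I expect to be the main technical point, is to exhibit the two resulting injections $\phi_F,\phi_G\colon\O(\underline{c})\hookrightarrow\E$ as fibers of a single rigid family. I consider the affine space $\A:=\op{Hom}(\O(\underline{c}),\E)$ with its universal map $\Phi\colon p^*\O(\underline{c})\to p^*\E$ on $X\times\A$, and let $T\subseteq\A$ be the locus where $\Phi_t$ is fiberwise injective. Since $\O(\underline{c})$ is torsion-free, fiberwise injectivity of $\Phi_t$ is equivalent to its injectivity at the generic point $\eta$ of $X$, a condition detected by non-vanishing of a top exterior power and therefore open on $\A$ by semicontinuity (in the spirit of the Fitting-ideal argument in \Cref{Replace}). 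Thus $T$ is open in $\A$, is non-empty since it contains $\phi_F$ and $\phi_G$, and is rational as an open subset of an affine space; restricting $\Phi$ to $T$ yields the desired rigid family whose fibers include $\F$ and $\G$.
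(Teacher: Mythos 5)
Your proposal is correct and follows essentially the same route as the paper: for $(2)\Rightarrow(1)$ you use \Cref{Primitive}(4) together with the Krull--Schmidt identification of shapes (the content of the proof of \Cref{Defined}) to produce two presentations with isomorphic kernels $\O(\underline{c})$ and the same middle term, and then realize both as fibers of the family over the open injectivity locus in $\Hom(\O(\underline{c}),\E)$ (the content of the proof of \Cref{Replace}); the converse direction via \Cref{SigmaComp} is likewise identical. The only difference is that you spell out inline the openness and Krull--Schmidt steps that the paper cites by reference.
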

\begin{proof}
If $\F$ and $\G$  are in the same biliaison class and have the same $\Sigma$ functions, then there are extensions 
\[
0 \to \O(\a) \to \E\oplus \O(\b) \to \F \to 0
\]
\[0 \to \O(\a) \to \E\oplus \O(\b) \to \G \to 0
\]
for a very primitive sheaf $\E$ by the proof of \Cref{Defined}.
The proof of \Cref{Replace} shows that $\F$ and $\G$ lie in a rigid family, parametrized by an open subscheme of the affine scheme $\Hom(\O(\a),\E\oplus \O(\b))$.

Conversely, if $\F$ and $\G$ lie in a rigid family, then there is a not necessarily primitive sheaf $\E$ and extensions 
\[
0 \to \O(\a) \to \E \to \F \to 0
\]
\[
0 \to \O(\a) \to \E \to \G \to 0.
\] 
It follows from \Cref{SigmaComp} that $\Sigma(\F,l) = \Sigma(\G,l) = \Sigma(\E,l)-\Sigma(\a,l)$ for all $l\in\mathbb{Z}$.
\end{proof}

\bigskip

As a direct corollary to \Cref{Main1}, we see that the associated poset of $(S_m^+)$ sheaves in a biliaison class is a meet-semilattice.
\newtheorem*{lattice2}{\Cref{Main1}2}
\begin{lattice2}\label{Lattice}
For $m\ge 1$, the $\Sigma$ functions of $(S_m^+)$ sheaves in a biliaison class form a meet-semilattice, i.e. a poset with meet. 
For $m = 1$, the $\Sigma$ functions of $(S_1^+)$ sheaves in a biliaison class form a lattice.
\end{lattice2}
\begin{proof}
Let $\F$ and $\G$ be two $(S_m^+)$ sheaves in the same biliaison class.
We find a sheaf $\E$ and $m$-reductions $\phi:\O(\a) \to \E$ and $\psi:\O(\b) \to \E$ where $\coker \phi \cong \F$ and $\coker \psi \cong \G$ by \Cref{Primitive}.
By \Cref{Join}, we find an $m$-reduction $\xi:\O(\c) \to \E$ where $\c = \a\wedge \b$. 
By \Cref{SigmaComp}, we see that $\Sigma(\coker \xi,l) = \min(\Sigma(\F,l), \Sigma(\G,l))$. 
The first conclusion follows. 
The second conclusion follows analogously from \Cref{Meet}.
\end{proof}

\bigskip

In the following, we show that the meet-semilattice of $\Sigma$ functions of $(S_m^+)$ sheaves in a biliaison class is always bounded below.

\begin{definition}
For $m\ge 1$, a \emph{minimal $(S_m^+)$ sheaf} is a sheaf that is minimal among all $(S_m^+)$ sheaves in its biliaison class with respect to the preorder $\preceq$.
\end{definition}

We make several remarks regarding this definition.

First, any two minimal $(S_m^+)$ sheaves in a biliaion class lie in a rigid family by \Cref{Rigid}.
Since we assume that $X$ is primitive in this section, all minimal $(S_m^+)$ sheaves in a biliaison class have the same intermediate cohomology modules and Hilbert functions.

Second, note that the Chern classes of minimal sheaves have smallest degrees (with respect to pairing with complementary powers of the hyperplane class $H$) among all $(S_m^+)$ sheaves in their biliaison classes. 

Third, since $\rank \F = \rank \E+\Sigma(\F,l)$ for $l\gg 0$, where $\E$ is a very primitive sheaf in the biliaison class of $\F$, 
we conclude that $\F\preceq\G$ implies $\rank \F\le \rank \G$.
In particular, minimal $(S_m^+)$ sheaves must have minimal rank among all $(S_m^+)$ sheaves in its biliaison class.

Fourth, one might ask if there is a minimal member among $(S_m^+)$ sheaves with a given rank in a biliaison class.
However, such a sheaf might not exist. 
Consider the biliaison class of the zero sheaf, one immediately sees that there is no minimal rank one bundle as we have the bundle $\O(l)$ for any $l\gg 0$.

Last but not least, in the liaison theory of subvarieties of $\P^n_k$, a variety is minimal in its even linkage class iff its ideal sheaf is minimal among all rank one $(S_1^+)$ sheaves in its biliaison class with respect to the preorder $\preceq$.
We will see that these ideal sheaves are in fact minimal among $(S_1^+)$ sheaves of \textbf{all} ranks in its biliaison class, i.e. they are minimal $(S_1^+)$ sheaves.

\bigskip

The next proposition says that in order for $\F$ to be a minimal $(S_m^+)$ sheaf, we only need to check that $\F\preceq \G$ for all $(S_m^+)$ sheaves \textbf{of minimal rank} in its biliaison class.
\begin{proposition}\label{Minimal}
If $m\ge 1$ and $\F \preceq \G$ for all $(S_m^+)$ sheaves $\G$ of minimal rank in the biliaison class of $\F$, then $\F$ is a minimal $(S_m^+)$ sheaf.
\end{proposition}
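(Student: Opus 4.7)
My plan is to prove a strengthening of the claim: $\F \preceq \G$ for \emph{every} $(S_m^+)$ sheaf $\G$ in the biliaison class of $\F$, which immediately forces $\F$ to be minimal. The key idea is to use the meet operation on the semilattice of $\Sigma$-functions (the content of \Cref{Main1}, concretely realized in \Cref{Lattice}) to replace an arbitrary $\G$ by a meet partner of minimal rank, thereby reducing to the hypothesis.

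First I would fix a minimal-rank $(S_m^+)$ sheaf $\F_0$ in the biliaison class; this exists because the set of ranks of such sheaves is a nonempty subset of $\mathbb{N}$, hence attains a minimum $r$. Given an arbitrary $(S_m^+)$ sheaf $\G$ in the class, I would then apply the meet construction from \Cref{Lattice} (i.e.\ \Cref{Join} together with \Cref{SigmaComp}) to produce an $(S_m^+)$ sheaf $\F_1$ satisfying $\Sigma(\F_1, l) = \min(\Sigma(\F_0, l), \Sigma(\G, l))$ for all $l \in \mathbb{Z}$.

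The crucial step is to verify $\rank \F_1 = r$. For this I would invoke \Cref{SigmaProp}(4): for $l \gg 0$ we have $\Sigma(\F_0, l) = r - \rank \E$ and $\Sigma(\G, l) = \rank \G - \rank \E$, where $\E$ is the very primitive sheaf in the biliaison class. Since $\rank \F_0 = r \le \rank \G$, the pointwise minimum at $l \gg 0$ equals $r - \rank \E$, pinning down $\rank \F_1 = r$.

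With $\F_1$ of minimal rank in hand, the hypothesis yields $\F \preceq \F_1$, and combining with $\Sigma(\F_1, l) \le \Sigma(\G, l)$ from the construction gives $\F \preceq \G$, as desired. I do not foresee a serious obstacle; the whole argument rests on the stabilization of $\Sigma$-functions for $l \gg 0$ (so that meets with a minimal-rank sheaf preserve minimal rank) together with the meet-semilattice structure already established.
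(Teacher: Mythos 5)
Your proof is correct and takes essentially the same route as the paper: both use the meet operation on $\Sigma$-functions furnished by \Cref{Main1}, note via the stabilization $\Sigma(\cdot,l)=\rank(\cdot)-\rank\E$ for $l\gg 0$ that the meet of an arbitrary $(S_m^+)$ sheaf with a minimal-rank one again has minimal rank, and then invoke the hypothesis. The only cosmetic difference is that the paper forms the meet with $\F$ itself (after observing that the hypothesis forces $\F$ to have minimal rank), whereas you form it with an auxiliary minimal-rank sheaf $\F_0$.
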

\begin{proof}
Clearly the condition implies that $\F$ has minimal rank among $(S_m^+)$ sheaves in its biliaison class. 
Suppose $\E$ is any $(S_m^+)$ sheaf in the biliaison class of $\F$, then can find a sheaf $\G$ where $\G\preceq \E$ and $\G\preceq \F$ by \Cref{Lattice}.
Since $\rank \F = \rank \G$ is minimal, by assumption we see that $\F\preceq \G$, and thus $\F\preceq \E$.
\end{proof}

 Migliore \cite{Migliore83} proved that every even linkage class of curves in $\P^3_k$ has a minimal member.
This result is then extended in \cite{BM89} to every even linkage class of pure codimension two Cohen-Macaulay subvarieties of $\P^n_k$ has a minimal member.
Nollet \cite{Nollet96} generalized this further to pure codimension $r$ subvarieties and removed the Cohen-Macaulay assumption, and described an algorithm to construct the minimal ideal sheaves given a primitive sheaf as input based on an algorithm in \cite{MDP90} for the case of space curves.
Combined with \Cref{Minimal}, the above results give us many examples of minimal sheaves.

\begin{corollary}\

\noindent There is a minimal $(S_1^+)$ sheaf in every biliaison class that admits an $(S_1^+)$ sheaf on $\P^n_k$.
\end{corollary}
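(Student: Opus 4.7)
The plan is to combine \Cref{Minimal} with the classical existence theorems for minimal members of even linkage classes of codimension two subschemes. First I would use \Cref{Primitive}(7) to see that the biliaison class of any $(S_1^+)$ sheaf on $\P^n_k$ contains a primitive $(S_1^+)$ sheaf, and hence by Krull--Schmidt a unique very primitive sheaf $\E$. If $\E = 0$ the class is that of $\O_{\P^n_k}$, in which case the zero sheaf lies in the class with $\Sigma$ function identically zero and is trivially a minimal $(S_1^+)$ sheaf.

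Assuming $\E \neq 0$, every $(S_1^+)$ sheaf in the class has rank at least one. The main theorem of \cite{Basic} cited in Section~2 produces, from any $(S_1^+)$ sheaf of rank $\ge 1$, a $1$-reduction whose cokernel has rank one. The resulting rank-one $(S_1^+)$ sheaf lies in the same biliaison class and, on $\P^n_k$, is necessarily of the form $\I_Y(k)$ for a pure codimension two subscheme $Y$. Via the Rao--Nollet--Hartshorne theorem of Section~3, the biliaison class is then identified with the even linkage class of $Y$. Next I would invoke the classical results of Migliore~\cite{Migliore83}, Bolondi--Migliore~\cite{BM89}, and Nollet~\cite{Nollet96} to obtain a minimal member $Y_0$ of this even linkage class, whose ideal sheaf $\I_{Y_0}(k_0)$ (for an appropriate twist $k_0$) realizes the minimum $\Sigma$ function among all rank-one $(S_1^+)$ sheaves in the class. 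Since the minimal rank is $1$, \Cref{Minimal} then applies and yields that $\I_{Y_0}(k_0)$ is a minimal $(S_1^+)$ sheaf.

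The hard part will be the dictionary between the classical notion of minimality---framed in terms of numerical characters, Hilbert functions, or the Lazarsfeld--Rao height in the cited papers---and the $\Sigma$-function preorder of \Cref{Preorder}. Tracking Nollet's algorithm and matching its numerical invariants with the degrees of the free summands appearing in a resolution $0 \to \O(\a) \to \E \oplus \O(\b) \to \I_{Y_0}(k_0) \to 0$ (so as to compute $\Sigma(\I_{Y_0}(k_0), l) = \Sigma(\b, l) - \Sigma(\a, l)$) is where the bulk of the translation happens; once the classical minimizer is known to minimize the $\Sigma$ function against all rank-one competitors, the lift to all ranks is automatic from \Cref{Minimal}.
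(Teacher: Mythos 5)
Your proposal matches the paper's intended argument exactly: the corollary is stated without a written proof immediately after the citations of Migliore, Bolondi--Migliore and Nollet, and is meant to follow by combining those classical existence theorems for minimal members of even linkage classes with \Cref{Minimal}, just as you do. The details you supply --- reducing to a rank-one $(S_1^+)$ sheaf via the reduction theorem of \cite{Basic}, and the dictionary between classical minimality and the $\Sigma$-function preorder (which the paper simply asserts in the remark preceding \Cref{Minimal}) --- are precisely what the paper leaves implicit, so no further comparison is needed.
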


We prove a generalization of the above result for $(S_m^+)$ sheaves on any projective variety $X$ satisfying our assumptions.

\begin{maintheorem}(Existence of minimal sheaves) \label{Existence}\

\noindent There is a minimal $(S_m^+)$ sheaf in every biliaison class that admits an $(S_m^+)$ sheaf if $m\ge 1$.
\end{maintheorem}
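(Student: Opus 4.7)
The plan is to construct a minimal $(S_m^+)$ sheaf whose $\Sigma$ function is pointwise smallest among those of minimal-rank $(S_m^+)$ sheaves in the biliaison class, realizing this pointwise infimum by a single sheaf via a finite meet in the semilattice provided by \Cref{Main1}. First I would fix a very primitive ancestor $\E$: by \Cref{Primitive}(7) the class contains a primitive sheaf, Krull--Schmidt extracts a very primitive summand $\E$, and \Cref{Primitive}(6) ensures $\E$ still satisfies $(S_m^+)$. Let $r$ be the minimum rank among $(S_m^+)$ sheaves in the class; by \Cref{Minimal} it suffices to find a rank-$r$ sheaf $\F$ with $\Sigma(\F, l) \le \Sigma(\G, l)$ for every rank-$r$ $(S_m^+)$ sheaf $\G$ and every $l \in \mathbb{Z}$.

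I would set $\sigma(l) := \inf \Sigma(\G, l)$, the infimum ranging over rank-$r$ $(S_m^+)$ sheaves $\G$ in the class. By \Cref{SigmaProp}(2) and (3) the values $\Sigma(\G, l)$ are integers bounded below, so $\sigma(l) \in \mathbb{Z}$ is attained at each $l$ by some $\F_l$. Fixing any rank-$r$ representative $\F_0$, its presentation $0 \to \O(\a) \to \E \oplus \O(\b) \to \F_0 \to 0$ shows $\Sigma(\F_0, l) = 0$ for $l$ below the minimum entry of $\a, \b$ and $\Sigma(\F_0, l) = r - \rank \E$ for $l$ above the maximum. Combined with the \Cref{SigmaProp} bounds this yields integers $l_0 \le l_1$ outside of which $\sigma$ coincides with these boundary constants. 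I would then form the iterated meet $\F := \F_0 \wedge \F_{l_0} \wedge \cdots \wedge \F_{l_1}$ via \Cref{Main1}. A length count---namely that when two rank-$r$ sheaves are presented as cokernels of $m$-reductions of a common ancestor $\E'$ their shapes have common length $\rank \E' - r$, and so does the meet---shows $\rank \F = r$, so $\F$ stays in the rank-$r$ stratum.

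A short case analysis then confirms $\Sigma(\F, l) = \sigma(l)$ for every $l$: inside $[l_0, l_1]$ the meet forces $\Sigma(\F, l) \le \Sigma(\F_l, l) = \sigma(l)$, with the reverse inequality by definition of $\sigma$; outside $[l_0, l_1]$ we have $\Sigma(\F, l) \le \Sigma(\F_0, l)$ equal to the boundary constant, and the \Cref{SigmaProp} lower bounds pin down equality. The main obstacles I anticipate are verifying that rank-$r$ minimality is preserved under the meet (so that $\F$ lies in the correct stratum) and identifying a finite interval $[l_0, l_1]$ outside of which $\sigma$ is constant---both reducing to the length accounting in \Cref{Join} together with the eventual constancy of any single $\Sigma(\F_0, \cdot)$ and the uniform bounds of \Cref{SigmaProp}.
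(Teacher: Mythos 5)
Your proposal is correct, and it rests on exactly the same pillars as the paper's argument — the meet operation of \Cref{Main1}, the lower bounds on $\Sigma$ functions from \Cref{SigmaProp}, and the restriction to sheaves of the minimal rank $r$ — but it concludes by a different mechanism. The paper argues by contradiction: if no minimal sheaf existed, repeated application of the meet would produce an infinite strictly descending chain $\F_1 \succ \F_2 \succ \cdots$ of rank-$r$ sheaves, whose $\Sigma$ functions are squeezed between $\Sigma(\F_1,-)$ and the fixed lower bound of \Cref{SigmaProp}; since these two agree outside a finite set of degrees, no infinite strict descent is possible. You instead construct the minimum directly, realizing the pointwise infimum $\sigma$ as the $\Sigma$ function of a single finite meet taken over sheaves attaining $\sigma$ at each of the finitely many degrees where $\sigma$ can differ from its boundary values. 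Both routes are sound; yours is more constructive in that it exhibits the minimal $\Sigma$ function explicitly and makes the role of \Cref{Minimal} visible, while the paper's termination argument is shorter because it never needs to introduce $\sigma$ or check degree by degree that the meet attains it. Two small points to fold into a final write-up: the degenerate case where the very primitive ancestor $\E$ is zero (so $e=+\infty$ and the bound $\Sigma(\G,l)\ge r-\rank\E$ is never invoked) is best dispatched separately, as the paper does, by noting the zero sheaf is then itself minimal; and your rank bookkeeping for the iterated meet is read off most cleanly from $\Sigma(\F,l)=\min_i\Sigma(\F_i,l)=r-\rank\E$ for $l\gg 0$ together with \Cref{SigmaProp}, which is equivalent to but tidier than counting lengths of shapes.
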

\begin{proof}
Let $\E$ be a very primitive sheaf satisfying $(S_m^+)$.
If $\E = 0$, then the zero sheaf is the minimal $(S_m^+)$ sheaf. 
If $\E\ne 0$, then the zero sheaf is not in $\D_m(\E)$. 
Since $m\ge 1$, any sheaf in $\D_m(\E)$ is torsion-free and thus has positive rank. 
Let $r$ be the minimal rank of sheaves in $\D_m(\E)$. 
Let $\F_1 \in \D_m(\E)$ be a sheaf of rank $r$. 
If $\F_1$ is not minimal, then there exists a sheaf $\G \in \D_m(\E)$ where $\F_1 \not\preceq \G$. 
By \Cref{Lattice}, there exists a sheaf $\F_2\in \D_m(\E)$ such that $\F_2\preceq \F_1$ and $\F_2 \preceq \G$. 
Since $\F\not\preceq \G$, we must have $\F_2\prec \F_1$.
Since $\rank \F_2 \le \rank \F_1$, we see that $\rank \F_2 = r$ as well. 
Suppose to the contrary that $\D_m(\E)$ has no minimal member, arguing analogously, we obtain an infinite descending chain of rank $r$ sheaves $\F_1 \succ\F_2 \succ \cdots $.
They give an infinite descending chain of $\Sigma$ functions $\Sigma(\F_1,-) > \Sigma(\F_2,-) > \cdots$.
Set $e := \inf \{l\mid H^0(\E(l))\ne 0\}$.
By \Cref{SigmaProp}, $e$ is an integer and $\Sigma(\F_i,l) = 0$ for $l \ll 0$, $\Sigma(\F_i,l) \ge 0$ for $l < e$, $\Sigma(\F_i,l) \ge r-\rank \E$ for $l\ge e$ and $\Sigma(\F_i,l) = r-\rank \E$ for $l\gg 0$.
We see that it is not possible to have an infinite descending chain of such functions $\Sigma(\F_1,-)>\Sigma(\F_2,-)>\cdots$.
The assertion follows.
\end{proof}

The existence of minimal sheaves allows us to strengthen the structure theorem for $(S_m^+)$ sheaves in a biliaison class.

\newtheorem*{strong}{\Cref{Weak}2}
\begin{strong}(Strong structure theorem)\label{LazarsfeldRao}
Suppose $X$ is primitive and $\F$ is an $(S_m^+)$ sheaf for $m\ge 1$.
There are $(S_m^+)$ sheaves $\F = \F_0,\dots, \F_l$ in the biliaison class of $\F$, such that $\F_l$ is a minimal $(S_m^+)$ sheaf, and $\F_i, \F_{i+1}$ are related in one of the following manner:
\begin{enumerate}[leftmargin=*]
\item[(a)] there is a descending elemenatary $(S_m^+)$-biliaison from $\F_i$ to $\F_{i+1}$, 
\item[(b)] $\F_i$ and $\F_{i+1}$ belong in a rigid family,
\item[(c)] there is an $m$-reduction $\phi:\O(\a)\to \F_i$ with $\coker \phi \cong \F_{i+1}$ for some $\a$.
\end{enumerate}
If $\F$ is of minimal rank among $(S_m^+)$ sheaves in its biliaison class, then we do not need (c).
\end{strong}
\begin{proof}
Let $\G$ be a minimal $(S_m^+)$ sheaf in the biliaison class of $\F$, whose existence follows from \Cref{Existence}. 
If we follow the proof of \Cref{Weak}, we see that the elementary $(S_m^+)$-biliaison involved at every step is decreasing. 
\end{proof}

In fact, when $m = 1$, we do not need deformations by rigid families in manner (b) at all. 
A proof of this can be given based on \cite[Proposition 3.6]{Hartshorne03}.

\bigskip

Although \Cref{Existence} gives us a theoretical guarantee that minimal $(S_m^+)$ sheaves exist, it does not tell us how to produce or identify them in practice. 
The next theorem solves this problem by giving a sufficient condition for a sheaf to be a minimal $(S_m^+)$ sheaf.
This generalizes the sufficient condition for a curve in $\P^3_k$ to be minimal proven in \cite{LR83}.

\begin{maintheorem}(Sufficient condition for minimal sheaves)\label{Sufficient}
Let $m\ge 1$, and let $\F$ be an $(S_m^+)$ of minimal rank in its biliaison class. 
If $\F$ admits an extension 
\[
0 \to \O(\a) \to \E \to \F \to 0
\]
where $\E$ is primitive and $H^0(\F(l)) = 0$ for all $l< \max(\a)$, then $\F$ is a minimal $(S_m^+)$ sheaf.
\end{maintheorem}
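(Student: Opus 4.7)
My plan is to apply Proposition \ref{Minimal}: it suffices to show that $\F \preceq \G$ for every $(S_m^+)$ sheaf $\G$ of minimal rank in the biliaison class of $\F$. Fix such a $\G$. Since $\E$ is primitive, \Cref{Primitive}(4) yields an extension
\[
0 \to \O(\underline{a}') \to \E \oplus \O(\underline{b}') \to \G \to 0,
\]
and the rank equality $\rank \G = \rank \F$ forces $|\underline{a}'| = |\underline{a}| + |\underline{b}'|$. A direct comparison of $\Sigma$-functions coming from the two extensions reduces the target inequality $\F \preceq \G$ to the shape inequality $\underline{a}' \leq (\underline{a}, \underline{b}')$ in the sequence poset $\mathfrak{S}$.

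To exploit the hypothesis, set $N := \max(\underline{a})$. Primitivity of $X$ gives $H^1(\O_X(l)) = 0$ for all $l$, so the long exact sequence of $0 \to \O(\underline{a}) \to \E \to \F \to 0$ yields a short exact sequence $0 \to H^0(\O(\underline{a})(l)) \to H^0(\E(l)) \to H^0(\F(l)) \to 0$ in every degree. The vanishing $H^0(\F(l)) = 0$ for $l < N$ then identifies $H^0(\E(l))$ with $H^0(\O(\underline{a})(l))$ in that range: every section of $\E$ of degree strictly less than $N$ factors through $\phi: \O(\underline{a}) \to \E$. Decomposing the inclusion as $(f,g): \O(\underline{a}') \hookrightarrow \E \oplus \O(\underline{b}')$ and restricting to the subsheaf $\O(\underline{a}'_{<N})$ indexed by the entries of $\underline{a}'$ strictly less than $N$, the factoring produces a map $(\tilde f, g|_{<N}): \O(\underline{a}'_{<N}) \to \O(\underline{a}) \oplus \O(\underline{b}')$ whose post-composition with $\phi \oplus \mathrm{id}$ recovers the restriction of $(f,g)$. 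Since both $(f,g)$ and $\phi \oplus \mathrm{id}$ are injective, so is $(\tilde f, g|_{<N})$. A standard lemma---an injection of finite direct sums of line bundles on an integral projective variety with $\O(1)$ very ample forces a Hall-type matching of their shape sequences---translates this injection into the partial inequality $\underline{a}'_{<N} \leq (\underline{a}, \underline{b}')$, giving $\Sigma(\underline{a}', l) \leq \Sigma((\underline{a}, \underline{b}'), l)$ for every $l < N$.

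The main obstacle is to upgrade this low-degree comparison to all $l$. My plan is to combine the above with \Cref{Join}: pair the two $m$-reductions $\O(\underline{a}) \oplus \O(\underline{b}') \hookrightarrow \E \oplus \O(\underline{b}')$ (cokernel $\F \oplus \O(\underline{b}')$) and $\O(\underline{a}') \hookrightarrow \E \oplus \O(\underline{b}')$ (cokernel $\G$) to obtain an $m$-reduction of shape $\underline{d} := (\underline{a}, \underline{b}') \wedge \underline{a}'$ with $(S_m^+)$ cokernel $\mathcal H$ of minimal rank, satisfying $\Sigma(\mathcal H, l) = \min(\Sigma(\F, l), \Sigma(\G, l))$. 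Running the low-degree factoring argument on $\mathcal H$'s canonical extension $0 \to \O(\underline{d}) \to \E \oplus \O(\underline{b}') \to \mathcal H \to 0$ yields $\Sigma(\underline{d}, l) = \Sigma((\underline{a}, \underline{b}'), l)$, and hence $\Sigma(\mathcal H, l) = \Sigma(\F, l)$, for every $l < N$. Together with the asymptotic behavior of $\Sigma$-functions described in \Cref{SigmaProp} ($\Sigma \equiv 0$ for $l \ll 0$ and stabilizing to $\rank \F - \rank \E'$ for $l \gg 0$, where $\E'$ is the very primitive summand of $\E$), the rank equality $\rank \mathcal H = \rank \F$, and the pointwise bound $\Sigma(\mathcal H) \leq \Sigma(\F)$, I expect a careful step-by-step counting to force $\Sigma(\mathcal H, l) = \Sigma(\F, l)$ for all $l$, which immediately gives $\Sigma(\F, l) \leq \Sigma(\G, l)$ throughout. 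This final propagation across the range $[N, \infty)$, which must rule out any ``gap'' opening between the two shape sequences at $l \geq N$, is the step I expect to be the most delicate.
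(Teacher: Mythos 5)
Your reduction of $\F\preceq\G$ to the shape inequality $\Sigma(\a',l)\le\Sigma((\a,\b'),l)$ and your treatment of the range $l<\max(\a)$ are exactly the paper's: there, too, the vanishing $H^0(\F(l))=0$ for $l<\max(\a)$ is used to factor the low-degree part of $\O(\a')\to\E\oplus\O(\b')$ through $\O(\a)\oplus\O(\b')$, and the resulting injection of sums of line bundles gives the inequality in those degrees. The gap is the complementary range $l\ge\max(\a)$. The ingredients you list for the propagation --- $\Sigma(\mathcal{H},\cdot)\le\Sigma(\F,\cdot)$ pointwise, equality for $l<\max(\a)$, equality for $l\gg0$, and $\rank\mathcal{H}=\rank\F$ --- do not force equality on all of $\mathbb{Z}$: the nonnegative deficit $\Sigma(\F,l)-\Sigma(\mathcal{H},l)$ is pinned to zero at both ends, but nothing you have written prevents it from being positive on a middle interval, i.e.\ $\Sigma(\G,\cdot)$ could dip below $\Sigma(\F,\cdot)$ at $l=\max(\a)$ and only recover later. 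No ``step-by-step counting'' from these facts alone can exclude that, so the step you flag as delicate is in fact missing, and the detour through the meet $\mathcal{H}$ of \Cref{Join} does not supply it.

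What closes the case $l\ge\max(\a)$ in the paper is a second, degree-by-degree use of the minimal-rank hypothesis rather than a counting argument. Restrict the presentation of $\G$ to $\O(\a')_{\le l}\to\E\oplus\O(\b')$; since every map $\O(\a')_{\le l}\to\O(\b')_{>l}$ is zero, this injection lands in $\E\oplus\O(\b')_{\le l}$, and the depth lemma shows its cokernel $\G''$ is again an $(S_m^+)$ sheaf in the biliaison class, hence $\rank\G''\ge\rank\F$. Unwinding $\rank\G''=\rank\E+\Sigma(\b',l)-\Sigma(\a',l)\ge\rank\F=\rank\E-\Sigma(\a,l)$ (the last equality because all entries of $\a$ are $\le l$) gives precisely $\Sigma(\a',l)\le\Sigma((\a,\b'),l)$. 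Note that this works for an arbitrary $(S_m^+)$ sheaf $\G$ in the class, so your preliminary reduction through \Cref{Minimal} to minimal-rank $\G$ is harmless but unnecessary. Replacing your meet-and-counting step with this truncation argument completes the proof.
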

\begin{proof}
If $\G$ is another $(S_m^+)$ sheaf in the biliaison class of $\F$, then it admits an extension of the form
\[
0 \to \O(\c) \to \E \oplus \O(\d) \to \G \to 0
\]
for some  $\c$ and $\d$ since $\E$ is primitive.
By \Cref{SigmaComp}, we need to show that 
\[
\rank (\O(\a) \oplus \O(\d))_{\le l} \ge \rank \O(\c)_{\le l},\quad \forall l\in\mathbb{Z}.
\]
We separate into two cases, where $l\ge \max(\a)$ and $l< \max(\a)$. 

\textbf{Case} $l \ge \max(\a)$: 
We have an exact sequence $0 \to \O(\c)_{\le l} \to \E \oplus \O(\d) \to \G' \to 0$, where $\G'$ is an extension of $\G$ with $\O(\c)_{>l} := \O(\c)/\O(\c)_{\le l}$.
In particular, the sheaf $\G'$ satisfies $(S_m^+)$. 
Since any map $\O(\c)_{\le l} \to \O(\d)_{>l} := \O(\d)/\O(\d)_{\le l}$ is zero, the injection $\O(\c)_{\le l} \to \E \oplus \O(\d)$ lands inside $\E \oplus \O(\d)_{\le l}$.
We obtain an exact sequence $0 \to \O(\c)_{\le l} \to \E \oplus \O(\d)_{\le l} \to \G'' \to 0$, where $\G''$ sits in an exact sequence $0 \to \G'' \to \G' \to \O(\d)_{>l} \to 0$.
By the depth lemma, the sheaf $\G''$ also satisfies $(S_m^+)$.
Now $\rank \O(\a)_{\le l} = \rank \O(\a) = \rank \E -\rank \F$ by the assumption on $a$.
Since $\F$ has minimal rank among sheaves in $\D_m(\E)$, we conclude that
\[
\rank \E + \rank \O(\d)_{\le l} - \rank \O(\c)_{\le l}= \rank \G'' \ge \rank \F = \rank \E -\rank \O(\a)_{\le l}.
\] 
It follows that $\rank (\O(\a)\oplus \O(\d))_{\le l} \ge \rank \O(\c)_{\le l}$.

\textbf{Case} $l < \max(\a)$: 
The cokernel $\F'$ of $\O(\a)_{\le l} \to \E$ is an extension of $\F$ by $\O(\a)_{>l}$. 
Since $H^0(\F(n)) = 0$ for all $n\le l$, the same is true for $\F'$.
We have the exact sequence
\[
0 \to \O(\a)_{\le l} \oplus \O(\d)_{\le l} \to \E \oplus \O(\d) \to \F' \oplus \O(\d)_{>l} \to 0.
\]
The composition $\O(\c)_{\le l} \to \E \oplus \O(\d) \to  \F' \oplus \O(\d)_{>l}$ is zero since $\F'$ has no sections in degree $\le l$.
It follows that the injection $\O(\c)_{\le l} \to \E$ factors through $\O(\a)_{\le l} \oplus \O(\d)_{\le l}$, and we conclude that $\rank (\O(\a) \oplus \O(\d))_{\le l} \ge 
 \rank \O(\c)_{\le l}$.
\end{proof}

The next theorem is a necessary condition for an $(S_m^+)$ sheaf of mimimal rank in its biliaison class to be a minimal $(S_m^+)$ sheaf.
\begin{maintheorem}[Necessary condition for minimal sheaves]\label{Necessary} \

\noindent Let $0\to \O(\a) \to \E  \to \F \to 0$ be an extension, where $\E$ is primitive of rank $\ge m$ and $\F$ is a minimal $(S_m^+)$ sheaf. 
If $\O(\c) \to \E$ is any surjection, then $\O(\c')\preceq \O(\a)$, where $\c'$ consists of the largest $u:=\rank \E-m $ entries of $\c$.
\end{maintheorem}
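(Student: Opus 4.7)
The plan is to prove $\Sigma(\c',l)\le\Sigma(\a,l)$ for all $l$ in two steps: (i) show that $\a$ is maximal, in the lattice order on $\mathfrak{S}$, among the shapes of all $m$-reductions of $\E$; and (ii) produce an $m$-reduction of $\E$ of shape $\c'$.

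For (i): the depth lemma applied to $0\to\O(\a)\to\E\to\F\to 0$ shows that $\E$ itself satisfies $(S_m^+)$, so $\O(\a)\to\E$ is an $m$-reduction with cokernel $\F$. For any other $m$-reduction $\psi:\O(\b)\to\E$, the cokernel $\coker\psi$ is an $(S_m^+)$ sheaf in the biliaison class of $\F$. Decomposing $\E=\E_0\oplus\O(\mu)$ with $\E_0$ very primitive, \Cref{SigmaComp} gives $\Sigma(\coker\psi,l)=\Sigma(\mu,l)-\Sigma(\b,l)$ and $\Sigma(\F,l)=\Sigma(\mu,l)-\Sigma(\a,l)$; minimality of $\F$ forces $\Sigma(\F,l)\le\Sigma(\coker\psi,l)$, which rearranges to $\Sigma(\b,l)\le\Sigma(\a,l)$ for all $l$, establishing the maximality claim.

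For (ii): let $t_i\in H^0(\E(c_i))$ denote the section given by restricting $\pi$ to the $i$-th summand of $\O(\c)$, and let $\iota:\O(\c')\hookrightarrow\O(\c)$ be the inclusion of the summands corresponding to the top $u$ entries of $\c$. I consider the affine parameter space of maps $\phi:\O(\c')\to\E$ sending the $j$-th summand to $t_j+\sum_{i<j,\,c_i\le c_j}r_{ij}\,t_i$ with $r_{ij}\in H^0(\O(c_j-c_i))$ arbitrary, i.e., the perturbations of $\pi\circ\iota$ realizable via upper-triangular automorphisms of $\O(\c)$. A Fitting-ideal / semicontinuity argument as in the proof of \Cref{Replace} shows the locus yielding an $m$-reduction is open; non-emptiness follows from the surjectivity of $\pi$ (which ensures the $t_i$ span $\E(x)$ at every codimension $\le m$ point $x$) combined with the very ampleness of $\O(1)$ (providing sufficient scalars in each $H^0(\O(c_j-c_i))$), via a one-point-at-a-time basic-element argument modeled on Step 4 of the proof of \Cref{Join}.

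Combining (i) and (ii) applied to the constructed $\phi:\O(\c')\to\E$ gives $\Sigma(\c',l)\le\Sigma(\a,l)$, i.e., $\O(\c')\preceq\O(\a)$. The main obstacle will be the construction in (ii): the perturbations of each top section $t_j$ are constrained to combinations of $t_i$ with $c_i\le c_j$, so one must verify that this constrained family is rich enough to achieve basicness in $\E$ at all codimension $\le m$ points of $X$ simultaneously.
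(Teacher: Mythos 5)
Your proposal is correct and follows essentially the same route as the paper: your step (i) is the paper's one-line deduction from minimality of $\F$ (via \Cref{SigmaComp} and the definition of $\preceq$), and your step (ii) is exactly the content of \cite[Theorem 2.5]{Basic}, which the paper simply cites rather than reproves. The basic-element construction you sketch for (ii) --- perturbing the top $u$ sections by lower-degree ones and fixing basicness in codimension $\le m$ one point at a time, as in Step 4 of \Cref{Join} --- is the right argument for that cited result, so the ``main obstacle'' you flag is genuine work, but it is work the paper outsources to its reference.
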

\begin{proof}
By \cite[Theorem 2.5]{Basic}, there is always an $m$-reduction $\phi:\O(\c')\to \E$ whose cokernel has rank $m$. 
It follows that $\O(\c')\preceq\O(\a)$ since $\F$ is a minimal $(S_m^+)$ sheaf.
\end{proof}
We remark that the necessary condition in \Cref{Necessary} is not tight in general.
The following is an example on how one could use \Cref{Necessary} in practice.
\begin{example}
Suppose $\F$ is a sheaf of minimal rank $r$ in $\D_m(\E)$, where $\E$ is primitive of rank $\ge m$.
Let there be an extension of the form 
\[
0 \to \O(-2)^v \to \E \to \F \to 0.
\]
If $\E$ is generated in degree 1, then there is a surjection $\O(-1)^N \to \E$ for some large $N$. 
Since there is an $m$-reduction of the form $\phi: \O(-1)^{u} \to \E$, where $u =\rank \E -m \le \rank \E -r = v$, \Cref{Necessary} says that $\F$ cannot be a minimal $(S_m^+)$ sheaf since there must be an $m$-reduction of $\E$ with shape $(\underbrace{1,\dots,1}_{u}) \wedge(\underbrace{2,\dots, 2}_{v}) = (\underbrace{1,\dots,1}_{u},\underbrace{2,\dots, 2}_{v-u})$.
\end{example}

%
%
%

\section{Applications}
In this section, we study some applications of biliaison to vector bundles on $X = \P^n_k$.

We say a bundle is minimal if it is a minimal $(S_n^+)$ sheaf where $n = \dim X$.

\begin{example}
If $X = \P^2_k$, then every bundle $\F$ in is in the biliaison class of the zero sheaf $\F$ admits a resolution of the form $0 \to \O(\a) \to \O(\b) \to \F \to 0$. 
More generally, on $X = \P^n_k$, a bundle $\F$ is in the biliaison class of the zero sheaf iff $H^i_*(\F) = 0$ for $1\le i\le n-2$.
Since the $\Sigma$ function of a bundle determines and is determined by its Hilbert function $H$, we see that the Hilbert functions $H$ of such bundles form a meet-semilattice in a suitable partial order, and any two bundles with the same Hilbert function $H$ lie in a rational family that preserves intermediate cohomology modules. 
All possible Hilbert functions $H$ of such bundles on $\P^n_k$ were classified in \cite{Bundles}, and the moduli $\M_{0,H}$ of bundles with Hilbert function $H$ in the biliaison class of the zero sheaf was described.
\end{example}

The next theorem is due to Buraggina \cite{Buraggina99}, based on computations in \cite{MDP92}.
We provide a conceptual proof of the same result.

\begin{theorem*}[{Buraggina \cite{Buraggina99}}]
A rank two bundle $\E$ on $\P^3_k$ is a minimal $(S_2^+)$ (i.e. reflexive) sheaf if and only if it is indecomposable.
\end{theorem*}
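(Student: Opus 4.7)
The plan is to handle both directions using the structure theorems developed in the paper. For the only-if direction, suppose $\E$ is decomposable; then $\E\cong \O(a)\oplus \O(b)$, and this sheaf lies in the biliaison class of the zero sheaf on $\P^3_k$ (every line bundle is stably equivalent to $\O$ and hence belongs to this class). In that class $\Sigma(\mathbf{0},-)\equiv 0$ while $\Sigma(\E,l)=2$ for $l\gg 0$, so $\mathbf{0}\prec \E$ strictly, and $\E$ is not a minimal reflexive sheaf.

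For the if direction, suppose $\E$ is indecomposable. I first claim that $\E$ has minimum rank two among reflexive sheaves in its biliaison class. Otherwise the class would contain a reflexive sheaf of rank $\le 1$ -- a line bundle or the zero sheaf -- placing $\E$ itself in the biliaison class of the zero sheaf. By \Cref{Primitive}(4), $\E$ would then be a descendant of $\O$, yielding a resolution $0\to L_0\to L_1\to \E\to 0$ with $L_0,L_1$ direct sums of line bundles. Since $\E$ is reflexive of rank two, the map $L_0\to L_1$ must be fiberwise injective (otherwise $\coker$ would carry torsion), so $\E$ is locally free. The long exact cohomology sequence then gives $H^1_*(\E)=0$, and via the self-duality $\E^\vee\cong \E(-c_1)$ together with Serre duality also $H^2_*(\E)=0$. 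Horrocks' splitting criterion now forces $\E$ to split into line bundles, contradicting indecomposability.

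Next I apply \Cref{LazarsfeldRao} to produce a chain $\E=\F_0,\F_1,\ldots,\F_l=\E_{\min}$ of reflexive sheaves in the biliaison class, where consecutive pairs are related by (a) a descending elementary $(S_2^+)$-biliaison, (b) a rigid family, or (c) a $2$-reduction. Since $\E$ has minimum rank, (c) is unnecessary and every $\F_i$ is rank two reflexive. The key observation is that no rank two reflexive sheaf $\F$ on $\P^3_k$ admits a nontrivial elementary $(S_2^+)$-biliaison: such a move requires an $(S_2^+)$-Serre correspondence $\O(-a)\to \F$, but its cokernel is a rank one reflexive sheaf on $\P^3_k$ -- i.e.\ a line bundle $\O(b)$ -- and the extension $0\to \O(-a)\to \F\to \O(b)\to 0$ splits since $\Ext^1(\O(b),\O(-a))=H^1(\O(-a-b))=0$, forcing $\F$ to be a direct sum of line bundles. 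Consequently, any occurrence of step (a) would make some $\F_j$ decomposable and thereby pull the entire biliaison class into that of the zero sheaf, contradicting the previous paragraph. Hence only step (b) appears in the chain, and rigid families preserve the $\Sigma$-function by \Cref{Rigid}; therefore $\Sigma(\E)=\Sigma(\E_{\min})$ and $\E$ is minimal. The main work is concentrated in establishing the minimum-rank claim via Horrocks' criterion; once that input is in place, the rest is a formal consequence of the strong structure theorem.
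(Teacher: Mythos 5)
Your reduction to a minimal-rank claim plus the structure theorem is a reasonable strategy, and the minimal-rank step (via Horrocks' splitting criterion) is fine, but the key claim that kills step (a) is wrong, and with it the whole second half. You argue that any Serre correspondence $\O(-a)\to\F$ out of a rank two reflexive sheaf on $\P^3_k$ has a rank one reflexive, hence invertible, cokernel, so the extension splits. That is true only for $(S_2^+)$-Serre correspondences, i.e.\ $2$-reductions. But the elementary biliaisons actually produced by the structure theorem are one level down: \Cref{Weak} yields elementary $(S_{m-1}^+)$-biliaisons, and the proof of \Cref{LazarsfeldRao} simply follows that of \Cref{Weak}. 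For $m=2$ the common cokernel is therefore only an $(S_1^+)$ (torsion-free) rank one sheaf, i.e.\ a twisted ideal sheaf $\I_C(b)$ of a curve $C\subseteq\P^3_k$ --- this is exactly the classical Serre/Hartshorne correspondence --- and the extension $0\to\O(-a)\to\F\to\I_C(b)\to 0$ does not split in general (the null correlation bundle sits in such a sequence with $C$ a line). So you cannot conclude that only rigid moves occur in the chain. Indeed your argument, as written, would apply verbatim to any rank two reflexive sheaf of minimal rank in the class and prove all of them minimal; but a rank two reflexive sheaf with $c_3>0$ in the class of an indecomposable bundle is not minimal, so the claim cannot be repaired.

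The content you are missing is precisely what the paper supplies. To compare two rank two bundles $\E,\E'$ in the class, it uses Horrocks' elimination of cohomology to build a universal extension $0\to\O(-\a+c_1)\to\G\to\E\to 0$ with $\G\cong\tilde{\Omega^2 M}$ the sheafified second syzygy of $M=H^1_*(\E)$; since $\G$ is primitive and depends only on $M$, \Cref{SigmaComp} gives $\Sigma(\E,-)=\Sigma(\E',-)$, so all rank two bundles in the class share the same $\Sigma$ function. To handle reflexive sheaves $\F\preceq\E$ that might not be locally free, one then uses \Cref{LazarsfeldRao} to see $c_3\F\le c_3\E=0$, whence $\F$ is a bundle by Hartshorne's criterion ($c_3\ge 0$ with equality iff locally free), reducing to the bundle case. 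Neither of these steps appears in your proposal, and they constitute the actual substance of the proof.
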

\begin{proof}
If $\E$ is decomposable, then it is the direct sum of two line bundles. 
In this case, the minimal reflexive sheaf in the class of $\E$ is the zero sheaf.
Suppose $\E$ is indecomposable, then the zero sheaf is not in the biliaison class of $\E$ since $H^1_*(\E)\ne 0$.
First we show that $\E$ is a minimal bundle.
By \Cref{Minimal}, we only need to show that $\E \preceq \E'$ for any other rank two bundle $\E'$ in its biliaison class.

Let $M := H^1_*(\E)$ and $c_1 := c_1(\E)$.
The Horrocks' technique of eliminating homology shows that there are universal extensions killing $H^1_*(\E)$ and $H^2_*(\E)$
\[
0 \to \E \to \F \to \O(\a) \to 0
\]
\[
0\to \O(-\a+c_1) \to \G \to \E \to 0,
\]
which fit into the display 
\[
\begin{tikzcd}[column sep=small, row sep = small]
 & & 0 \arrow[d] & 0 \arrow[d]\\
 0 \arrow[r]& \O(-\a+c_1)\arrow[r] \arrow[d,equal]& \G \arrow[r]\arrow[d,"d"]& \E \arrow[d]\arrow[r]& 0\\
 0 \arrow[r]& \O(-\a+c_1)  \arrow[r]& \H \arrow[d] \arrow[r]& \F\arrow[d] \arrow[r]& 0\\
 & & \O(\a) \arrow[r,equal]\arrow[d]& \O(\a) \arrow[d]&\\
  & & 0 & 0 &
\end{tikzcd}
\]
of a monad $\O(-\a+c_1) \to \H \to \O(\a)$ \cite{BH78}. 
By Horrocks' criterion of splitting, we see that $\H \cong \O(\b)$ for some $\b$ as it has no $H^1_*$ nor $H^2_*$. 
If we chose $H^0_*(\O(\a))\to M$ to be a minimal system of generators, then a result in \cite{HR91} shows that the map $d:\G \to \H$ would not split off summands.
It follows that $0 \to H^0_*(\G) \to H^0_*(\H) \to H^0_*(\O(\a) \to M$ are the first steps of a minimal free resolution of $M$, and $\G \cong \tilde{\Omega^2M}$, where $\Omega^2M$ denotes the second minimal syzygy of $M$. 
The same statements apply to $\E'$, and $H^1_*(\E') \cong H^1_*(\E) =M$.
Since $\G$ is primitive, we conclude from \Cref{SigmaComp} that $\Sigma(\E,l) = \Sigma(\E',l)$.

If $\F$ is a reflexive sheaf in the biliaison class of $\E$ such that $\F\preceq\E$, then $\F$ has rank at most two. 
Since $\F$ is neither zero or $\O(l)$, it must have rank exactly two.
Since $\E$ is obtained using finitely many ascending elementary $(S_2^+)$-biliaison and rigid deformations by \Cref{LazarsfeldRao}, we see that $c_3\F \le c_3\E$. 
It follows from \cite[Prop. 2.6]{Hartshorne80} that $\F$ is in fact a bundle, and thus $\E\preceq \F$ as well by the above.
\end{proof}

\bigskip

Let us summarize the situation on $\P^3_k$. 
The finite length module $M = H^1_*(\E)$ uniquely determines the stable equivalence class of a primitive bundle $\E$  \cite{Horrocks64}, and therefore uniquely determines the biliaison class of a bundle $\E$ on $\P^3_k$.
There are three possibilities.
\begin{enumerate}[leftmargin=*]
\item The minimal bundles of a biliaison class have rank two if and only if $M$ satisfies the condition in \cite{Decker90}.
\item The minimal bundle of a biliaison class is the zero sheaf if and only if $M = 0$. 
\item The minimal bundles of all other biliaison classes have rank three. 
\end{enumerate}

\bigskip

Perhaps not surprisingly, we show that the Horrocks-Mumford bundle is minimal.
\begin{theorem}
The Horrocks-Mumford bundle $\F$ on $\P^4_\mathbb{C}$ is minimal. 
\end{theorem}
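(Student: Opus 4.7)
The plan is to verify the hypotheses of the sufficient criterion \Cref{Sufficient}. This requires (i) showing $\F$ has minimal rank among the relevant $(S_m^+)$ sheaves in its biliaison class, and (ii) exhibiting a primitive ancestor $\E$ of $\F$ in an extension $0 \to \O(\a) \to \E \to \F \to 0$ with $H^0(\F(l)) = 0$ for all $l < \max(\a)$.

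For (i), the key observation is that every rank one reflexive sheaf on $\P^4_\C$ is a line bundle, because $\P^4$ is smooth and factorial. Hence a rank one sheaf in the biliaison class of $\F$ would be some $\O(l)$, forcing $\F$ to lie in the biliaison class of $\O$. Since $\O$ is primitive, \Cref{Primitive}(4) would then make $\F$ a descendant of $\O$, i.e.\ would put $\F$ in an extension of the form $0 \to \O(\a) \to \O(\b) \to \F \to 0$. Taking cohomology on $\P^4$ and using $H^1_*(\O(b)) = H^2_*(\O(a)) = 0$, this would force $H^1_*(\F) = 0$, contradicting the well-known fact that the Horrocks-Mumford bundle has nontrivial intermediate cohomology. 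Thus rank two is minimal.

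For (ii), I would apply a Horrocks-style elimination procedure to the graded module $M := \Ext^1_*(\F, \O_{\P^4})$. Since $\F$ has rank two, $\F^\vee \cong \F(-c_1)$, and so $M$ is a twist of the explicit intermediate cohomology module $H^1_*(\F)$ of the HM bundle, which is of finite length and whose structure is known from the monad description. Choosing a minimal homogeneous generating set $\xi_1,\dots,\xi_k$ of $M$ in degrees $d_1 \le \cdots \le d_k$ and taking the universal extension they define yields
\[
0 \to \bigoplus_{i=1}^k \O(d_i) \to \E \to \F \to 0
\]
with $\Ext^1_*(\E,\O) = 0$, so $\E$ is primitive. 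In the paper's notation this reads $0 \to \O(\a) \to \E \to \F \to 0$ with $\max(\a) = -d_1$.

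It then remains to verify the cohomological vanishing $H^0(\F(l)) = 0$ for all $l < -d_1$. I expect this to be the main obstacle: it is the one step that uses the specific numerics of the Horrocks-Mumford bundle rather than general principles. Concretely, $d_1$ is the smallest degree in which $H^1_*(\F(-c_1))$ acquires a minimal generator, while $-d_1$ must lie strictly below the threshold at which $H^0_*(\F)$ becomes nonzero; both thresholds can be read off from the well-documented cohomology table of $\F$, and the self-duality $\F^\vee \cong \F(-c_1)$ is what couples them. Once this inequality is checked, \Cref{Sufficient} applies and $\F$ is minimal.
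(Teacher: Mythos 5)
Your strategy is the same as the paper's: both proofs verify the hypotheses of \Cref{Sufficient}. Your treatment of minimal rank (rank one reflexive sheaves on $\P^4$ are line bundles, and membership in the class of $\O$ would force $H^1_*(\F)=0$) is exactly the argument the paper records in the remark following its proof. Where you diverge is in producing the primitive ancestor: you invoke the general universal-extension construction of \Cref{Primitive}(7), killing $\Ext^1_*(\F,\O)$ by a minimal set of generators, whereas the paper takes the ancestor straight from the Horrocks--Mumford monad, namely $\ker q$ with kernel $\O(2)\otimes V_1$, and proves primitivity of $\ker q$ by chasing the Koszul resolution of $\wedge^2\mathcal{T}$. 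The paper's choice buys two things: an explicit, $H$-equivariant ancestor, and a kernel all of whose summands are $\O(2)$, so that $\max(\a)=-2$ and the vanishing hypothesis reduces to the standard fact $H^0(\F(l))=0$ for $l<0$.

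The one genuine deficiency in your write-up is that you stop at the step you yourself identify as the crux: you never verify $H^0(\F(l))=0$ for $l<\max(\a)$. For the record, it does check out. Since $\F^\vee\cong\F(-5)$, one has $\Ext^1(\F,\O(l))\cong H^1(\F(l-5))$, and $H^1(\F(k))\ne 0$ only for $0\le k+1\le 3$; hence the lowest degree of a minimal generator of $\Ext^1_*(\F,\O)$ is $d_1=4$, so $\max(\a)=-4$, while $H^0(\F(l))=0$ already for all $l<0$. So your argument closes, but as submitted it is an outline at its decisive point; either supply this computation or, more economically, use the monad kernel as the paper does, where only the weaker inequality $\max(\a)=-2\le 0$ is needed.
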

\begin{proof}
Let $H \subseteq \op{SL}(5,\mathbb{C})$ be the Heisenberg group.
Let $V = \op{Map}(\mathbb{Z}/5,\mathbb{C})$ and let $V_1,\dots, V_4$ be the four irreducible representations of $H$ arising from $V$ as in \cite{HM73}.
Let $W = \Hom_H(V_1,\wedge^2 V)$.
The Horrocks-Mumford bundle $\F$ is the homology of the monad
\[
\O(2)\otimes V_1 \xrightarrow{p} \wedge^2 \mathcal{T} \otimes W\xrightarrow{q} \O(3)\otimes V_3.
\]
We show that $\ker q$ is primitive. 
By the short exact sequence 
\[
0 \to \ker q \to  \wedge^2 \mathcal{T} \otimes W\to \O(3)\otimes V_3 \to 0,
\]
it suffices to show that $\wedge^2 \mathcal{T}$ is primitive. 
Consider the Koszul complex
\[
0\to \O \to \O(1)\otimes V \to \O(2)\otimes \wedge^2 V \to \O(3)\otimes \wedge^3 V \xrightarrow{d} \O(4)\otimes \wedge^4 V \to \O(5)\otimes \wedge^5 V \to 0,
\]
where $\ker d \cong \wedge^2 \mathcal{T}$.
We see that $\Ext^2(\im d, \O(l)) = 0$ for all $l$ by the short exact sequence 
\[
0 \to \im d \to \O(4)\otimes \wedge^4 V \to \O(5)\otimes \wedge^5 V \to 0,
\]
and thus $\Ext^1(\wedge^2\mathcal{T},\O(l)) = 0$ for all $l$ by the short exact sequence
\[
0 \to \wedge^2 \mathcal{T} \to \O(3)\otimes \wedge^3 V  \to \im d \to 0.
\]
Finally, we have $H^0(\F(l)) = 0$ for $l< 0$ \cite[\S 4]{HM73}. 
Since the maximum degree of $\O(2)\otimes V_1$ is $-2$, the conclusion follows from \Cref{Sufficient} applied to the extension
\[
0 \to \O(2)\otimes V_1 \to \ker q \to \F \to 0. \qedhere
\]
\end{proof}

Since rank one reflexive sheaves on $\P^n_k$ are just line bundles $\O(l)$, and $\F$ is not in the biliaison class of the zero sheaf, the above proof shows that $\F$ is in fact a minimal $(S_2^+)$ (i.e. reflexive) sheaf in its biliaison class.
Note that $H^0(\ker q(-2)) \cong V_1$, and therefore all minimal bundles in the biliaison class of $\F$ are equivalent under the action of $\op{PGL}(5,\mathbb{C})$.
In particular, all reflexive sheaves in the biliaison class of the Horrocks-Mumford bundle are constructed from it using finitely many steps of ascending elementary biliaisons, rigid deformations and extensions by line bundles.

\bigskip
The stable equivalence of primitive bundles $\E$ on $\P^4_k$ are completely classified by the $S$-modules $M := H^1_*(\E)$, $N := H^2_*(\E)$ and an element $\xi\in \Ext^2_S(M,N)$ up to isomorphisms of this data \cite{Horrocks64}, where $S := k[x_0,\dots, x_4]$. 
In particular, the invariant $(M,N,\xi)$ characterizes biliaison classes of bundles.

\begin{question}
Can one characterize, in terms of the invariant $(M,N,\xi)$, when minimal bundles of a biliaison class on $\P^4_k$ have rank $r = 2,3$?
\end{question}

This question is conceivably difficult to answer when $r = 2$ due to a lack of examples of rank two bundles in $\P^4_k$.

\nocite{*}
\bibliographystyle{unsrt}
\bibliography{Biliaison_of_sheaves}

\end{document}